\newtheorem{theorem}{Theorem}[section]
\newtheorem{conjecture}[theorem]{Conjecture}
\newtheorem{corollary}[theorem]{Corollary}
\newtheorem{definition}[theorem]{Definition}
\newtheorem{lemma}[theorem]{Lemma}
\newtheorem{problem}[theorem]{Problem}
\newtheorem{proposition}[theorem]{Proposition}
\newenvironment{proof}[1][Proof]{\noindent\textbf{#1.} }
{\hfill \ \rule{0.5em}{0.5em}}
\newcommand{\fq}{\mathbb{F}_q}
\title{\vspace{-1cm}On the chromatic number of the Erd\H{o}s-R\'{e}nyi orthogonal polarity graph}
\author{
Xing Peng
\thanks{Department of Mathematics, University of California, San Diego, CA, USA, x2peng@math.ucsd.edu }
\and
Michael Tait
\thanks{Department of Mathematics, University of California, San Diego, CA, USA, mtait@ucsd.edu}
\and
Craig Timmons
\thanks{Department of Mathematics and Statistics, California State University Sacramento.  Research supported by CSU Sacramento Provost's Research Incentive Fund.}
}
\date{}
\begin{document}
\maketitle

\vspace{-1cm}

\begin{abstract}
For a prime power $q$, let $ER_q$ denote the Erd\H{o}s-R\'{e}nyi orthogonal polarity graph.  We prove
that if $q$ is an even power of an odd prime, then $\chi ( ER_{q}) \leq 2 \sqrt{q} + O ( \sqrt{q} / \log q)$.  This upper bound is best possible up to a constant factor of at most 2.
If $q$ is an odd  power of an odd prime and satisfies some condition on irreducible polynomials, then we improve the best known  upper bound for $\chi(ER_{q})$ substantially. We also show that for sufficiently large $q$, every $ER_q$ contains a subgraph that is
not 3-chromatic and has at most 36 vertices.
\end{abstract}

\section{Introduction}

Let $q$ be a prime power and let $V$ be a 3-dimensional vector space over $\mathbb{F}_q$.  Let $PG(2,q)$ be the projective
geometry whose points are the 1-dimensional subspaces of $V$ and whose lines are the 2-dimensional subspaces of $V$.
The \emph{Erd\H{o}s-R\'{e}nyi orthogonal polarity graph}, denoted $ER_q$, is the graph whose vertices are the points
of $PG(2,q)$, and distinct vertices $(x_0 , x_1 , x_2)$ and $(y_0 , y_1 , y_2)$ are adjacent if and only if
$x_0 y_0 + x_1 y_1  + x_2 y_2 = 0$.
One obtains an isomorphic graph if the equation for adjacency is $x_2 y_0 + x_0 y_2 = x_1 y_1$ (see
\cite{mw}) and it is this definition of $ER_q$ that we will use.

\medskip
These graphs were constructed independently by
Brown \cite{b}, and Erd\H{o}s, R\'{e}nyi, and S\'{o}s \cite{ers} and have many applications to problems in extremal graph theory.  The graph $ER_q$ has $q^2 + q + 1$ vertices, has $\frac{1}{2}q ( q+1)^2$ edges, and
has no 4-cycle as a subgraph.  F\"{u}redi \cite{fu1983}, \cite{fu1996} proved that for $q \geq 15$ a prime power,
a graph with $q^2 + q + 1$ vertices and no 4-cycle has at most $\frac{1}{2}q ( q+ 1)^2$ edges.  The Erd\H{o}s-R\'{e}nyi graphs show that F\"{u}redi's upper bound is best possible.  This is perhaps the most well-known application of $ER_q$ to extremal graph theory, but there are many others.  The interested reader is referred to \cite{lv}, \cite{kpr}, and \cite{bgm} for applications of $ER_q$ to problems in hypergraph Tur\'{a}n theory, Ramsey theory, and structural graph theory.

\medskip

Because of its important place in extremal graph theory, many researchers have studied the graph $ER_q$. Benny Sudakov posed the question of determining the independence number of $ER_q$ (see \cite{Williford Thesis}), and it has since been investigated in several papers. Mubayi and Williford \cite{mw} proved that if $p$ is a prime, $n \geq 1$ is an integer, and $q = p^n$, then
\[
\alpha (ER_q) \geq \left\{
\begin{array}{ll}
\frac{1}{2} q^{3/2} + \frac{1}{2} q + 1 & \mbox{if $p$ is odd and $n$ is even}, \\
\frac{120 q^{3/2} }{ 73 \sqrt{73} } & \mbox{if $p$ is odd and $n$ is odd}, \\
\frac{ q^{3/2} }{ 2 \sqrt{2} } & \mbox{if $p=2$ and $n$ is odd}, \\
q^{3/2} - q + q^{1/2} & \mbox{if $p =2$ and $n$ is even}.
\end{array}
\right.
\]
 An upper bound $\alpha (ER_q ) \leq q^{3/2} + q^{1/2} + 1$ can be obtained from Hoffman's bound.  Therefore, the order of magnitude of $\alpha (ER_q )$ is $q^{3/2}$. Godsil and Newman refined the upper bound obtained from Hoffman's bound in \cite{GN}. Their result was then improved using the Lov\'{a}sz theta function in \cite{dNPS}. When $q$ is even, Hobart and Williford \cite{HW} used coherent configurations to provide upper bounds for the independence number of general orthogonal polarity graphs. When $q$ is an even square,  the know upper bound and lower bound for  $\alpha(ER_q)$ differ by at most $1$. In the case when $p$ is odd or when $p = 2$ and $n$ is odd, it is still an open problem to determine an asymptotic formula for
$\alpha ( ER_q)$.

\medskip
 Since the independence number has been well-studied and its order of magnitude is known, it is  natural  to  investigate the chromatic number of $ER_q$ which is closely related to $\alpha(ER_q)$. As $ER_q$ has $q^2+q+1$ vertices  and $\alpha(ER_q)=\Theta(q^{3/2})$, a  lower bound for $\chi(ER_q)$ is $\frac{q^2 + q + 1}{ \alpha (ER_{q} ) } \geq q^{1/2}$. One may ask whether this  lower bound actually gives the right order of magnitude of $\chi(ER_q)$. We confirm this for $q$ being an even power of an odd prime.
\medskip


\begin{theorem}\label{th:chromatic number}
If $q = p^{2r}$ where $p$ is an odd prime and $r \geq 1$ is an integer, then
\[
\chi ( ER_q ) \leq 2 \sqrt{q}+ O ( \sqrt{q} / \log q ).
\]
\end{theorem}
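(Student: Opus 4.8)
The plan is to cover $V(ER_q)$ with $2\sqrt{q}+O(\sqrt q/\log q)$ independent sets; since any cover of a graph by $k$ cocliques refines to a proper $k$-colouring (replace the $i$-th set by itself minus the union of the earlier ones), this gives the theorem. Write $s=\sqrt q=p^{r}$ and use the affine picture of $ER_q$: a vertex is either a point $(1:a:b)$, with $(1:a:b)\sim(1:c:d)$ precisely when $b+d=ac$, or one of the $q+1$ points on the line at infinity; the latter induce a star $K_{1,q}$, hence are $2$-colourable and may be set aside, so only the $q^{2}$ affine points matter.

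First I would record the Erd\H{o}s--R\'enyi extremal coclique in a convenient form. Since $s$ is odd, $\mathbb{F}_s^{*}$ consists of squares of $\mathbb{F}_q$, and of the $s+1$ one-dimensional $\mathbb{F}_s$-subspaces $\mu\mathbb{F}_s$ of $\mathbb{F}_q$ exactly $\tfrac{s+1}{2}$ have all nonzero elements non-square in $\mathbb{F}_q$; fix one, $\lambda\mathbb{F}_s$. For $W\subseteq\mathbb{F}_s$ put $S_W=\{(1:a:\tfrac{a^{2}}{2}+\lambda v):a\in\mathbb{F}_q,\ v\in W\}$. There are no edges of $ER_q$ between distinct vertical lines of $S_W$, because for $a\neq c$ the element $ac-\tfrac{a^{2}+c^{2}}{2}=-\tfrac{(a-c)^{2}}{2}$ is a nonzero square and so cannot lie in $\lambda\mathbb{F}_s$; inside one vertical line the only edges pair $\tfrac{a^{2}}{2}+\lambda v$ with $\tfrac{a^{2}}{2}-\lambda v$. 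Choosing $W=H\cup\{0\}$ with $H\sqcup(-H)=\mathbb{F}_s^{*}$ makes $S_W$ independent, of size $\tfrac12 q^{3/2}+\tfrac12 q$, which with $(0:0:1)$ adjoined is the Mubayi--Williford lower bound.

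The second step colours roughly half of the affine points very cheaply. The $\tfrac{s+1}{2}$ subspaces whose nonzero part is non-square partition the non-squares of $\mathbb{F}_q$; hence the fattened parabolas $P_{\lambda}=\{(1:a:\tfrac{a^{2}}{2}+\lambda v):a\in\mathbb{F}_q,\ v\in\mathbb{F}_s\}$ over these $\tfrac{s+1}{2}$ subspaces together cover exactly the $q$ absolute points and the $\tfrac12 q(q-1)$ non-absolute points $(1:a:b)$ with $b-\tfrac{a^{2}}{2}$ a nonzero non-square, with the absolute points (the case $v=0$) as their only common part. By the computation above each $P_{\lambda}$ induces a disjoint union of matchings, so is $2$-colourable, and putting the absolute points in one class makes that class $S_W$ itself. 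This uses $s+1=\sqrt q+O(1)$ colours and leaves only the $\tfrac12 q(q-1)$ affine points with $b-\tfrac{a^{2}}{2}$ a nonzero square.

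The final and hardest step is to colour these remaining points with a further $\sqrt q+O(\sqrt q/\log q)$ cocliques, and it is a genuinely separate task: no automorphism of $ER_q$ swaps the two square-classes, since a similitude of a non-degenerate ternary quadratic form has square multiplier (compare determinants), and the clean "union of matchings" phenomenon is lost here, the analogous fattened parabolas along the square subspaces containing triangles. One natural approach is to cover this region by a structured family of $\sqrt q+O(1)$ cocliques built from dilates and parabola-translates of $S_W$, chosen so that the family tiles all of it except a set of size $O(q^{2}/\log q)$ that has itself been arranged to split into $O(\sqrt q/\log q)$ independent sets. Keeping the exceptional set this small—rather than paying the $\log q$ factor a purely greedy cover would cost—is the crux of the argument and is what produces the error term $O(\sqrt q/\log q)$; adding the three contributions then gives $\chi(ER_q)\le(s+1)+(s+O(\sqrt q/\log q))+O(1)=2\sqrt q+O(\sqrt q/\log q)$.
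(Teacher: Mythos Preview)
Your first two steps are correct and attractive: the fattened parabolas $P_\lambda$ over the $(s+1)/2$ non-square lines $\lambda\mathbb{F}_s$ really do induce matchings, and they cover the absolute points together with all affine points whose defect $b-\tfrac{a^2}{2}$ is a non-square, at a cost of $s+1=\sqrt{q}+O(1)$ colours.

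The gap is the third step. What you wrote there is not a proof but a description of what a proof would have to accomplish: you say one should cover the square-defect region by $\sqrt{q}+O(1)$ cocliques ``chosen so that'' the exceptional part is only $O(q^2/\log q)$ and already partitions into $O(\sqrt{q}/\log q)$ independent sets, and you call this ``the crux of the argument'' without supplying it. Nothing you have set up forces such a tiling to exist, and indeed your own observation that the square-side fattened parabolas contain triangles shows that the mechanism which made the first half cheap is unavailable here. A leftover set of size $\Theta(q^2)$ inside a $q$-regular $C_4$-free graph only yields $O(\sqrt{q})$ colours via Alon--Krivelevich--Sudakov if its induced maximum degree is $O(\sqrt{q})$, and the square-defect region has no such degree bound: a typical vertex there still has $\Theta(q)$ neighbours inside it.

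The paper's proof avoids this asymmetry entirely. Working in the model $G_{q^2}$ (vertices $\mathbb{F}_{q^2}^2$, edge relation $(x_1+y_1)^2=x_2+y_2$), it takes the single $2$-colourable set $J=\{(x,y\theta+z):y\neq 0\}$ and pushes it around by the automorphisms $\psi_{k\theta}(x,y)=(x+k\theta,\,y+4k\theta x+2k^2\theta^2)$ for $k\in\mathbb{F}_s$, producing a set $S$ properly coloured with $2s$ colours. The leftover $X=V\setminus S$ is then a single explicit $s^3$-point set, and a direct computation shows $\Delta(G_{q^2}[X])\le 2s-1$. Since $G_{q^2}$ is $C_4$-free, the Alon--Krivelevich--Sudakov bound gives $\chi(G_{q^2}[X])=O(s/\log s)$, and this is where the $O(\sqrt{q}/\log q)$ term comes from. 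The point is that the $2s$ colours are spent covering \emph{almost everything}, not half of everything, so that what remains has small degree rather than merely small size. Your scheme, as it stands, leaves a dense half of the graph uncoloured and provides no substitute for this degree bound.
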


This upper bound is within a factor of 2 of the lower bound $\chi (ER_{q} ) \geq \frac{q^2 + q + 1}{ \alpha (ER_{q} ) } \geq q^{1/2} $.  Any improvement in the coefficient of $q^{1/2}$ would give an improvement to the best known lower bound on the independence number of $ER_q$ from \cite{mw}.   The lower order term $O( q^{1/2} / \log q )$ is obtained using probabilistic methods \cite{aks} and while the implied constant is absolute, we have not made an effort to compute it.  By using Brooks' Theorem instead of
the result of Alon et.\ al. \cite{aks}, we obtain the upper bound $\chi (ER_q) \leq 4q^{1/2}+1$ for all $q = p^{2r}$ where $r\geq 1$ is an integer and $p$ is an odd prime.

When $q$ is not an even power, we first prove the following general theorem.

\begin{theorem}\label{th:chromatic number 2}
Let $q$ be an odd power of an odd prime and let $r \geq 1$ be an integer.  If there is a $\mu \in \mathbb{F}_q$
such that $x^{2r+1} - \mu$ is irreducible in $\mathbb{F}_q [x]$, then
\[
\chi ( ER_{q^{2r+1} } ) \leq  \frac{2r+5}{3} q^{ \frac{4r}{3}+1 }+(2r+1)q^{r+1}+1.
\]
\end{theorem}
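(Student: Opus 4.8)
The plan is to mirror the strategy behind Theorem~\ref{th:chromatic number}: pass to the affine model, cover as much of the vertex set as possible by large independent sets built from the field structure, and clean up the rest. In the affine model the vertices of $ER_{q^{2r+1}}$ are $\mathbb{F}_{q^{2r+1}}^2$ together with a line at infinity, and $(a,b)\sim(c,d)$ iff $a+c=bd$; the line at infinity, minus the single vertex $(1,0,0)$, induces a star, and its pencil of edges into the affine part only forbids, for each affine ``horizontal line'' $\{(a,b):b=\lambda\}$, one extra colour. Hence, provided the affine colouring uses comparatively few colours on each such line, the line at infinity costs only $O(1)$ colours together with the ``$+1$'' in the bound, and the task reduces to properly colouring the affine graph.

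The irreducibility hypothesis enters through a root $\theta$ of $x^{2r+1}-\mu$: it yields the $\mathbb{F}_q$-basis $1,\theta,\dots,\theta^{2r}$ of $\mathbb{F}_{q^{2r+1}}$, and since $\theta\cdot\theta^{2r}=\mu\in\mathbb{F}_q$, multiplication by $\theta$ permutes this basis as a $\mu$-twisted cyclic shift. Consequently, writing elements of $\mathbb{F}_{q^{2r+1}}$ as polynomials in $\theta$ of degree below $2r+1$ and reducing products modulo $\theta^{2r+1}-\mu$, the coordinates of a product $bd$ are twisted cyclic convolutions of the coordinates of $b$ and of $d$. This product is the only nonlinear ingredient of the adjacency rule, and controlling it is precisely what the hypothesis buys us.

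For the independent sets I would combine the two natural families. (i) \emph{Sheared conics}: for $b$ in an $\mathbb{F}_q$-subspace $B$ and an $\mathbb{F}_q$-linear map $w\colon B\to\mathbb{F}_{q^{2r+1}}$, the set $\{(\tfrac12 b^2+w(b),b):b\in B\}$ is independent iff $w(b)+w(d)\ne-\tfrac12(b-d)^2$ for all $b\ne d$ in $B$, which holds when $\operatorname{im}(w)$ is a subspace complementary to the ``square directions'' of $B$. (ii) \emph{Subfield strips}: fix $b$ in a coset of $\mathbb{F}_q$ and let the $\mathbb{F}_{q^{2r+1}}/\mathbb{F}_q$-image of $a$ range over a set $U$ with $U\cap(-U)=\varnothing$; then $a+c\notin\mathbb{F}_q$ while $bd\in\mathbb{F}_q$, so the set is independent. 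Individually these families produce only independent sets of size $\Theta(q^{2r+1})$, respectively $\Theta(q^{2r+2})$, which is far below $\alpha(ER_{q^{2r+1}})=\Theta(q^{3(2r+1)/2})$, so one cannot tile by maximum independent sets, and using a fresh palette on each of the $\approx q^{2r}$ ``$b$-strips'' only recovers the trivial (Brooks) bound $q^{2r+1}$. The crux is therefore to align the sheared subfield-type classes in different $b$-strips so that one colour may be reused across many strips, i.e.\ so that the union over all strips of the matching classes has no edges at all; eliminating these cross-strip edges is exactly where the twisted-convolution description of $bd$ must be exploited, and it will force a single uniform, $\theta$-adapted choice of the sets $U$ and the shears $w$.

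Finally I would optimise the one remaining parameter -- how finely the $2r$ ``high'' $\theta$-coordinates of $b$ are split between the strip decomposition and the part pushed into an uncovered remainder -- and I expect the balance point to be of cube-root type, which is what produces the exponent $\tfrac{4r}{3}+1$ and the coefficient $\tfrac{2r+5}{3}$; the remainder should be a graph supported on $O(q^{r+1})$ vertices in each of $2r+1$ coordinate directions, accounting for the $(2r+1)q^{r+1}$ term, and it can be coloured crudely (one colour per vertex, or by Brooks). The two main obstacles I anticipate are: first, producing the globally independent families, since the cross-strip non-adjacency must hold simultaneously for \emph{all} pairs of strips, which forces delicate uniform choices in terms of $\theta$ and a verification that certain convolution sums never vanish (a cancellation argument of the same flavour as $C_4$-freeness); second, the bookkeeping of the optimisation, making sure the constants emerge as $\tfrac{2r+5}{3}$ and $2r+1$ rather than something slightly larger. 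The remaining steps -- the reduction to the affine model, the star analysis at infinity, and the independence checks for (i) and (ii) -- should be routine.
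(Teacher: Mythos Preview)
The outline has the right shape --- large independent sets from the field structure, then a remainder --- but the specifics are off in several places and the key technical step is absent. First, family (ii) is wrong as stated: if $b,d$ lie in a nontrivial coset $\alpha+\mathbb{F}_q$ then $bd=\alpha^2+\alpha(b_0+d_0)+b_0 d_0\notin\mathbb{F}_q$, so your independence check fails except on $\mathbb{F}_q$ itself. Second, you have the two terms of the bound playing the wrong roles. In the paper the smaller term $(2r+1)q^{r+1}$ is the \emph{main} colouring: working in the quadratic model $G_{q^t}$ (adjacency $(x+y)^2=x'+y'$, not $a+c=bd$), the pair $I^\pm$ of Lemma~\ref{le:6.1 lemma}(ii) is translated by the additive automorphisms $\psi_k$ of Lemma~\ref{le:6.2 lemma} with $k$ ranging over $\mathbb{F}_q\theta^r+\cdots+\mathbb{F}_q\theta^{2r}$, colouring a set $S$ with $2q^{r+1}$ colours; $2r$ further copies of this colouring are then produced by the multiplicative automorphisms $\phi_{\theta^l}$, $1\le l\le 2r$. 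No ``aligning colours across strips'' is needed --- the automorphisms do the alignment for free, and this is the idea your crux step is groping for. The dominant term $\tfrac{2r+5}{3}\,q^{4r/3+1}$ is not an optimisation of the main colouring at all; it is a Brooks bound on the leftover set $Z=X\cap\bigcap_{l=1}^{2r}\phi_{\theta^l}(X)$, where $X=V(G_{q^t})\setminus S$.

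The genuine content of the proof, which your plan does not anticipate, is that degree bound. One shows that membership in $Z$ forces every $\theta$-coordinate of the second component to be a specific quadratic polynomial in the $\theta$-coordinates of the first, so the degree of a vertex of $G_{q^t}[Z]$ becomes the number of $\mathbb{F}_q$-solutions of an explicit system of $2r+1$ homogeneous quadratics in $2r+1$ unknowns. This is bounded in Lemma~\ref{le:sams lemma} by a rank argument on a $2(r-m+1)\times m$ Toeplitz matrix built from the unknowns, with $m=\lfloor\tfrac{2(r+1)}{3}\rfloor$; that choice of $m$ is exactly where the exponent $\tfrac{4r}{3}+1$ and the constant $\tfrac{2r+5}{3}$ come from. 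Your ``cube-root balance point'' intuition points at the right exponent, but the mechanism producing it is linear algebra on this Toeplitz system, not a parameter trade-off in the covering.
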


Given an odd integer $2r + 1 \geq 3$, there are infinitely many primes $p$ for which there is a $\mu \in \fq$ such that
$x^{2r+1} - \mu \in \fq [x]$ is irreducible (see Section 5 for more details), where $q$ is an arbitrary odd power of $p$.   Our method can also be used to prove
that if $q$ is a power of any odd prime, then
\[
\chi( ER_{q^3} ) \leq 6q^2 + 1.
\]
Here we do not need the existence of an irreducible polynomial $x^3 - \mu \in \fq [x]$.

For $q$ is an odd power of an odd prime, we have the following corollary.

\begin{corollary}
Let $q=p^s$ for an odd prime $p$ and an odd integer $s \geq 3$.  If $t>1$ is the smallest divisor of $s$ such that $x^t-\mu$ is irreducible in $\mathbb F_{p^{s/t}}[x]$  for some  $\mu \in {\mathbb F}_{p^{s/t}}$, then
\[
\chi(ER_q) \leq  \frac{t+4}{3} p^{s(2t+1)/3t}+tp^{(t+1)/2}+1.
\]

\end{corollary}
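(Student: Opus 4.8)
The plan is to obtain this statement as a direct specialization of Theorem~\ref{th:chromatic number 2}. Write $q = p^s$ and let $t > 1$ be the smallest divisor of $s$ for which there is a $\mu \in \mathbb{F}_{p^{s/t}}$ making $x^t - \mu$ irreducible in $\mathbb{F}_{p^{s/t}}[x]$ (if no such divisor exists, the statement is vacuous and there is nothing to prove). Put $Q = p^{s/t}$ and $r = (t-1)/2$. The idea is simply that $q = Q^{2r+1}$, so any upper bound for $\chi(ER_{Q^{2r+1}})$ is automatically an upper bound for $\chi(ER_q)$.

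First I would check that the pair $(Q,r)$ satisfies the hypotheses of Theorem~\ref{th:chromatic number 2}. Since $s$ is odd and $t \mid s$, the integer $s/t$ is odd, so $Q = p^{s/t}$ is an odd power of the odd prime $p$. Being a divisor of the odd number $s$, the integer $t$ is itself odd, and $t > 1$ forces $t \geq 3$; hence $r = (t-1)/2$ is a positive integer with $2r+1 = t$. By the defining property of $t$ there is $\mu \in \mathbb{F}_Q$ with $x^{2r+1} - \mu = x^t - \mu$ irreducible in $\mathbb{F}_Q[x]$. Thus Theorem~\ref{th:chromatic number 2}, applied to the prime power $Q$ and the integer $r$, gives
\[
\chi(ER_{Q^{2r+1}}) \leq \frac{2r+5}{3}\, Q^{\frac{4r}{3}+1} + (2r+1)\, Q^{r+1} + 1 .
\]

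It then remains only to rewrite both sides in terms of $p$, $s$, $t$. On the left, $Q^{2r+1} = (p^{s/t})^{t} = p^{s} = q$. On the right, substituting $r = (t-1)/2$ turns the coefficient $\frac{2r+5}{3}$ into $\frac{t+4}{3}$ and the exponent $\frac{4r}{3}+1$ into $\frac{2(t-1)}{3}+1 = \frac{2t+1}{3}$, so that $Q^{\frac{4r}{3}+1} = p^{s(2t+1)/(3t)}$; likewise $2r+1 = t$ and $Q^{r+1} = Q^{(t+1)/2} = p^{s(t+1)/(2t)}$. Substituting these in yields the asserted bound. I anticipate no genuine obstacle here: the substance of the estimate is entirely contained in Theorem~\ref{th:chromatic number 2}, and all that remains is the elementary divisibility-and-parity bookkeeping which guarantees that $(t-1)/2$ is a bona fide integer parameter, together with routine exponent arithmetic. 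The one point worth flagging is that the right-hand side of Theorem~\ref{th:chromatic number 2} is increasing in $r$, which is exactly why one should take $t$ to be the \emph{smallest} admissible divisor of $s$, so as to make the resulting bound as strong as possible.
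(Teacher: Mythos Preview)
Your approach is correct and is exactly what the paper intends: the corollary is stated without proof and is meant to follow immediately from Theorem~\ref{th:chromatic number 2} by setting $Q=p^{s/t}$ and $2r+1=t$, which is precisely the specialization you carry out. Your verification that $Q$ is an odd power of an odd prime and that $r\ge 1$ is exactly the bookkeeping required.

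There is, however, one point you glossed over. You correctly compute $Q^{r+1}=p^{s(t+1)/(2t)}$, but then assert that substitution ``yields the asserted bound.'' It does not: the corollary as printed has $tp^{(t+1)/2}$ in the middle term, whereas your (correct) substitution gives $tp^{s(t+1)/(2t)}$. These coincide only when $s=t$. Since $t\mid s$ forces $s/t\ge 1$, your exponent is at least as large, so what you actually prove is
\[
\chi(ER_q)\le \frac{t+4}{3}\,p^{s(2t+1)/(3t)}+t\,p^{s(t+1)/(2t)}+1,
\]
which is the bound that genuinely follows from Theorem~\ref{th:chromatic number 2}. The exponent $(t+1)/2$ in the paper's statement appears to be a typo for $s(t+1)/(2t)$; in any case the middle term is of lower order than the first (since $\tfrac{2t+1}{3}>\tfrac{t+1}{2}$ for $t>1$), so the discrepancy does not affect the headline estimate. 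You should state the bound you actually derive rather than claim it matches the printed one verbatim.
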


 We encountered difficulties in extending this upper bound to the general case.  In particular, when $p$ is a prime, we have not been able to improve the upper bound
$\chi (ER_p) = O ( p / \log p )$ which is obtained by applying the main result of \cite{aks}.

\begin{conjecture}
Let $p$ be an odd prime.  For any integer $r \geq 0$,
\[
\chi ( ER_{ p^{2r+1} } ) = O( p^{r + 1/2} ).
\]
\end{conjecture}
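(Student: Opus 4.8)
\medskip
\noindent\textbf{A plan of attack.}  Since $p^{r+1/2}=\sqrt q$ when $q=p^{2r+1}$, the conjecture says $\chi(ER_q)=O(\sqrt q)$ for every odd power $q$ of an odd prime, which would match, up to a constant factor, the lower bound $\chi(ER_q)\ge (q^2+q+1)/\alpha(ER_q)$; the latter is itself of order $\sqrt q$ because $\alpha(ER_q)=\Theta(q^{3/2})$ (the Mubayi--Williford construction together with Hoffman's bound).  So the task is to cover the $q^2+q+1$ points of $PG(2,q)$ by $O(\sqrt q)$ independent sets of $ER_q$ --- equivalently, since two distinct points of $ER_q$ are non-adjacent precisely when they are not conjugate under the defining polarity, to partition $PG(2,q)$ into $O(\sqrt q)$ sets of pairwise non-conjugate points.

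The natural engine is the group $G$ of collineations of $PG(2,q)$ that commute with the polarity: $G$ has order $\Theta(q^3)$ and acts on the points with three orbits, namely the conic of $q+1$ absolute points (which is itself an independent set, costing one colour) and the two orbits of internal and external points, each of size $\Theta(q^2)$.  Suppose one can exhibit, inside each of the two large orbits $O$, an \emph{explicit} independent set $I_O$ with $|I_O|=\Omega(q^{3/2})$.  Then for $g_1,\dots,g_t$ chosen independently and uniformly from $G$, each point of $O$ avoids $\bigcup_i g_i(I_O)$ with probability $(1-|I_O|/|O|)^t\le e^{-\Omega(t/\sqrt q)}$, so with $t=C\sqrt q\log q$ and $C$ large a union bound over $O$ yields a proper colouring of $O$ with $t$ colours, hence $\chi(ER_q)=O(\sqrt q\log q)$.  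Two things then remain: (a) to produce such explicit orbit-contained independent sets for non-square $q$ --- it is not clear that the Mubayi--Williford lower bounds for non-square $q$ are of a form that supplies them --- and (b) to remove the $\log q$ factor.

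For (b), Theorems~\ref{th:chromatic number} and~\ref{th:chromatic number 2} show both the method and the difficulty.  In Theorem~\ref{th:chromatic number} the index-$2$ subfield $\mathbb{F}_{\sqrt q}\le\mathbb{F}_q$ supplies a \emph{structured} cover --- by sub-planes and sub-conics over $\mathbb{F}_{\sqrt q}$ --- with essentially no waste, giving the clean constant $2$.  When $q=p^{2r+1}$ is not a square there is no subfield near $\sqrt q$ (none at all other than $\mathbb{F}_p$ when $2r+1$ is prime), so one must substitute an \emph{additive} object: an $\mathbb{F}_p$-subspace $W\le\mathbb{F}_q$ of dimension $r$ or $r+1$, with the homogeneous coordinates of a colour class confined to prescribed cosets of $W$ chosen so that the adjacency form $x_2y_0+x_0y_2-x_1y_1$ cannot vanish on a distinct pair, and then one sweeps the cosets over $PG(2,q)$.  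This is the mechanism of Theorem~\ref{th:chromatic number 2}, where an irreducible $x^{2r+1}-\mu$ organises $\mathbb{F}_{q^{2r+1}}$ as a $(2r+1)$-dimensional $\mathbb{F}_q$-algebra; but the product terms of the bilinear form do not respect a purely additive substructure, and this mismatch --- together with the fact that $r+\tfrac12$ is not an integer --- is exactly what degrades the bound from the conjectured $q^{r+1/2}$ to the proven $q^{4r/3+1}$ (and from $p^{3/2}$ to $p^2$ for $ER_{p^3}$).

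The main obstacle is thus structural.  A sensible first target is to remove the dimension loss in the Theorem~\ref{th:chromatic number 2} construction and obtain $\chi(ER_{p^{2r+1}})=O_r(\sqrt q\,\mathrm{polylog}\,q)$ for each fixed $r\ge1$.  The genuinely hard case is $r=0$, i.e.\ $q=p$ prime: here $\mathbb{F}_p^+$ has no proper subgroup at all, so even the additive substitute vanishes, Theorem~\ref{th:chromatic number 2} degenerates, and improving on the generic bound $\chi(ER_p)=O(p/\log p)$ is open --- reminiscent of the notorious difficulty of bounding $\chi$ for Paley graphs of prime order.  I would expect progress there to need a genuinely new ingredient: either a new algebraic construction of large independent sets in $ER_p$ that tile under the polarity group, or a covering argument that exploits the full spectrum of $ER_q$ (eigenvalues $q+1$ and $\pm\sqrt q$) more strongly than Hoffman's bound does.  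A short proof of the conjecture in full seems unlikely.
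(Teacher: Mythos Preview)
The statement you were given is a \emph{conjecture}, not a theorem; the paper does not prove it and explicitly presents it as open.  Your submission recognises this and offers a discussion of obstacles and possible approaches rather than a proof, which is the appropriate response.

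Your commentary is accurate and tracks the paper's own remarks closely.  You correctly identify that the subfield mechanism behind Theorem~\ref{th:chromatic number} has no analogue when $q$ is not a square, that the additive substitute used in Theorem~\ref{th:chromatic number 2} loses a power of $q$ because the bilinear adjacency form does not respect the $\fq$-linear structure, and that the prime case $q=p$ is the genuine bottleneck (the paper itself singles this out, noting that nothing better than $O(p/\log p)$ is known).  The paper's concluding section makes essentially the same diagnosis, suggesting that sharpening Lemma~\ref{le:sams lemma} might push Theorem~\ref{th:chromatic number 2} down to $(2r+1+o(1))q^{r+1}$, which is still the square of the conjectured bound.

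One remark on your random-cover sketch: the argument as written would indeed give $O(\sqrt q\log q)$ \emph{provided} one has independent sets of size $\Omega(q^{3/2})$ inside each of the two non-absolute orbits of the orthogonal group.  You are right to flag this as unverified for non-square $q$; the Mubayi--Williford constructions in the odd-exponent cases are not obviously confined to a single orbit, and splitting them may cost the needed density on one side.  Even if this step went through, the resulting $O(\sqrt q\log q)$ would already be new for odd powers, so it is worth noting that the paper does not pursue this probabilistic route at all --- its methods are entirely constructive --- and your outline, if the orbit issue could be resolved, would be a genuinely different and potentially stronger approach than anything in the paper.
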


Instead of working with $ER_q$, we work with a related graph that is a bit more suitable for our computations.

\begin{definition}
Let $q$ be a power of an odd prime and $A = \{ ( a, a^2) : a \in \mathbb{F}_q \}$.  Let $G_q$ be the graph
with vertex set $\fq \times \fq$, and distinct vertices $(x_1 , x_2)$ and $(y_1 , y_2)$ are adjacent if and only if
\[
(x_1 , x_2) + (y_1 , y_2 ) \in A.
\]
\end{definition}

Let $G_q^{\circ}$ be the graph obtained from $G_q$ by adding loops to all vertices $(x_1 , x_2)$ for which
$(x_1 , x_2) + (x_1 , x_2) \in A$.
Vinh \cite{vinh} proved that the graph $G_q^{\circ}$ is a $(q^2 , q , \sqrt{2q} )$-graph.
Recall an $(n,d, \lambda)$ graph is an $n$-vertex $d$-regular graph whose second eigenvalue
$\max \{ | \lambda_2 | , | \lambda_n | \}$ is at most $\lambda$.
Vinh used the fact that $G_q^{\circ}$ is a $(q^2 , q , \sqrt{2q} )$-graph to count solutions to
$x_1 + x_2 = (x_3 + x_4)^2$ where $(x_1 , x_3) \subset B$, $(x_2 , x_4) \subset C$, and
$B,C \subset \fq^2$.  For similar results that are obtained using techniques from combinatorial number theory,
see \cite{cill}.  We prove that $G_q$ is isomorphic to an induced subgraph of the Erd\H{o}s-R\'{e}nyi orthogonal polarity graph.

\begin{theorem}\label{th:gq}
If $q$ is a power of an odd prime, then the graph $G_q$ is isomorphic to an induced subgraph of $ER_q$.
\end{theorem}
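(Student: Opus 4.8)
The plan is to write down an explicit isomorphism rather than argue abstractly. Since $q$ is odd, $2$ is invertible in $\fq$, so I would define a map $\phi \colon \fq \times \fq \to PG(2,q)$ by
\[
\phi(a,b) = \left( 1 : a : \tfrac{1}{2}\bigl(b - a^2\bigr) \right).
\]
Because every point in the image has first homogeneous coordinate equal to $1$, this representative is uniquely normalized, so $\phi$ is injective; moreover, as $(a,b)$ runs over $\fq^2$ the image is exactly the set $\{(1:s:t) : s,t \in \fq\}$ of points of $PG(2,q)$ lying off the line $x_0 = 0$. The claim is then that $\phi$ is a graph isomorphism from $G_q$ onto the subgraph of $ER_q$ induced on this set of $q^2$ points.

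The verification of the adjacency condition is the heart of the argument, and it is short. We use the form of $ER_q$ fixed in the introduction, in which $(x_0:x_1:x_2)$ and $(y_0:y_1:y_2)$ are adjacent if and only if $x_2 y_0 + x_0 y_2 = x_1 y_1$. For distinct $(a,b),(c,d) \in \fq^2$, substituting $\phi(a,b)$ and $\phi(c,d)$ gives
\[
x_2 y_0 + x_0 y_2 = \tfrac12 (b - a^2) + \tfrac12 (d - c^2), \qquad x_1 y_1 = ac ,
\]
so $\phi(a,b)$ and $\phi(c,d)$ are adjacent in $ER_q$ precisely when $b + d - a^2 - c^2 = 2ac$, i.e.\ when $b + d = (a+c)^2$, i.e.\ when $(a,b) + (c,d) \in A$. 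This is exactly the condition for $(a,b)$ and $(c,d)$ to be adjacent in $G_q$, so $\phi$ is the desired isomorphism onto an induced subgraph.

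Two minor points should be made explicit. First, both $ER_q$ and $G_q$ are simple loopless graphs: the absolute points of the polarity --- equivalently the vertices $(x_1,x_2)$ with $2(x_1,x_2) \in A$ --- are precisely those that would carry loops, and since adjacency is only compared between \emph{distinct} vertices these cause no trouble (this is why the statement concerns $G_q$ and not $G_q^{\circ}$). Second, the relation $x_2 y_0 + x_0 y_2 = x_1 y_1$ is visibly symmetric under swapping the two points, as it must be. I do not expect any genuine obstacle: the only step requiring thought is to guess the correct normalization $\tfrac12(b - a^2)$ of the third coordinate, after which everything is a one-line computation. Equivalently, one may first observe that the subgraph of $ER_q$ induced on the points with nonzero first coordinate has, after normalizing $x_0 = y_0 = 1$, adjacency $x_2 + y_2 = x_1 y_1$, and then apply the affine change of coordinates $(x_1,x_2) \mapsto (x_1,\, 2x_2 + x_1^2)$ to bring this to the form $x_2 + y_2 = (x_1 + y_1)^2$ defining $G_q$.
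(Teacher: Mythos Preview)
Your proof is correct and uses exactly the same map as the paper, $\phi(a,b)=(1,a,2^{-1}(b-a^2))$; your direct algebraic check of the adjacency condition is in fact cleaner than the paper's case-by-case neighborhood analysis. The only difference is that the paper goes further and extends $\phi$ to an explicit isomorphism $H_q\cong ER_q$ (adding $q+1$ vertices $z_1,\dots,z_q,y$ mapping to the points on the line $x_0=0$), which is not needed for the theorem as stated but is used later when transferring colorings of $G_q$ to colorings of $ER_q$.
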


In the course of proving Theorem \ref{th:gq} we will show how to obtain $ER_q$ from $G_q$ by adding vertices and edges
to $G_q$.  This will allow us to translate upper bounds on $\chi (G_q)$ to upper bounds on $\chi (ER_q)$.

In addition to finding a proper coloring of $ER_q$, we also investigate proper colorings of small subgraphs of $ER_q$.
In particular, we obtain the following result concerning small subgraphs of $ER_q$ that are not 3-colorable.

\begin{theorem}\label{th:main theorem}
If $q$ is sufficiently large, then $ER_q$ contains a subgraph $H$ with at most 36 vertices and
$\chi (H) \geq 4$.
\end{theorem}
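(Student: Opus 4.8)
The plan is to realise a fixed small graph that is not $3$-colourable inside $ER_q$ by reducing an integral configuration modulo $p$. The starting point is that the vertices of $ER_q$ are the points of $PG(2,q)$ and two points $[x],[y]$ are adjacent exactly when $B(x,y)=0$, where $B(x,y)=x_0y_2+x_2y_0-x_1y_1$ is the nondegenerate symmetric bilinear form from the definition of $ER_q$. Thus an induced subgraph of $ER_q$ is precisely the ``orthogonality graph'' of a set of points of $PG(2,q)$ with respect to $B$, and it suffices to exhibit a set of at most $36$ points of $PG(2,q)$ whose orthogonality graph $H$ satisfies $\chi(H)\ge 4$. Any such $H$ is automatically $C_4$-free (as is all of $ER_q$) and $K_4$-free (three pairwise orthogonal points span $\fq^{3}$, so no fourth point is orthogonal to all of them, whence $\omega(ER_q)=3$); these are genuine restrictions — wheels, complete bipartite graphs, blow-ups, Mycielskians and similar $4$-chromatic graphs are all excluded — so one cannot simply quote a small $4$-critical graph off the shelf.

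The heart of the argument is to produce one such configuration, once and for all, over $\mathbb{Z}$: primitive, pairwise non-proportional vectors $v_1,\dots,v_m\in\mathbb{Z}^{3}$ with $m\le 36$ whose orthogonality graph $H$ (for the same $B$, over $\mathbb{Q}$) is not $3$-colourable. A convenient way both to choose such a configuration and to certify $\chi(H)\ge 4$ is to take a three-dimensional Kochen--Specker configuration of rays: the statement that there is no $\{0,1\}$-valuation of the rays assigning exactly one $1$ to each orthogonal basis present is equivalent to its orthogonality graph being non-$3$-colourable, since a proper $3$-colouring $C_1\sqcup C_2\sqcup C_3$ would supply the valuation $\mathbf 1_{C_1}$ (an orthogonal pair meets two colour classes and an orthogonal triple is rainbow). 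Known $3$-dimensional Kochen--Specker configurations have fewer than $36$ rays; one clears denominators, and if the rays are defined over a quadratic field one either replaces them by a rational configuration or carries this along to the last step. Checking that the chosen $v_i$ realise $H$ correctly and that $\chi(H)\ge 4$ is then a finite verification.

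Finally, I transfer $H$ to $ER_q$. If $H$ is realised over $\mathbb{Q}$, let $S$ be the finite set of primes $p$ for which two of the reductions $[v_i\bmod p]$ coincide in $PG(2,\fp)$ (equivalently, $p$ divides one of finitely many $2\times2$ minors); for $q=p^{r}$ with $p\notin S$, reducing the $v_i$ modulo $p$ gives $m\le 36$ distinct points of $PG(2,\fp)\subseteq PG(2,\fq)$, every orthogonality $B(v_i,v_j)=0$ persists modulo $p$, and so the induced subgraph of $ER_q$ on these points contains $H$ as a spanning subgraph and has chromatic number $\ge\chi(H)\ge 4$. The remaining prime powers (those $q=p^{r}$ with $p\in S$, as well as all $q$ if the configuration is only defined over an extension of $\mathbb{Q}$) are handled uniformly by a Lang--Weil estimate: the orthogonality conditions defining $H$ cut out, over $\overline{\mathbb{F}_p}$, a positive-dimensional variety of configurations — geometrically irreducible provided $H$ is chosen with enough moduli — so there is a configuration over $\fq$ as soon as $q$ is large. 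Either way $ER_q$ contains a subgraph on at most $36$ vertices that is not $3$-colourable, for all sufficiently large $q$. The main obstacle is the middle step: pinning down an explicit configuration of at most $36$ (preferably rational) points whose $B$-orthogonality graph is $4$-chromatic and rigorously verifying the chromatic lower bound — most cleanly done by invoking a Kochen--Specker configuration — while retaining enough algebraic freedom in the configuration for the Lang--Weil argument at the exceptional primes; everything else is bookkeeping.
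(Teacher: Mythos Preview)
Your strategy is genuinely different from the paper's, and the Kochen--Specker idea is a nice observation: the implication ``proper $3$-colouring $\Rightarrow$ KS valuation'' is correct, so the orthogonality graph of any KS set is indeed not $3$-colourable. But the proposal has a real gap, and it is precisely the one you yourself flag.

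The known small $3$-dimensional KS configurations --- Conway--Kochen's $31$ rays and Peres's $33$ rays --- have entries in $\mathbb{Z}[\sqrt{2}]$, not in $\mathbb{Z}$. You assert that one ``either replaces them by a rational configuration or carries this along,'' but you do neither. I do not know a rational $3$-dimensional KS set with at most $36$ rays in the literature, and you do not produce one. Carrying $\sqrt{2}$ along means your reduction argument only lands in $\fq$ when $2$ is a square there, which excludes $q=p^{r}$ with $p\equiv \pm 3\pmod 8$ and $r$ odd --- infinitely many prime powers, not a finite exceptional set. Your Lang--Weil fallback then has to do real work, and ``geometrically irreducible provided $H$ is chosen with enough moduli'' is not an argument: for a fixed $H$ with many orthogonality constraints the realisation variety could be zero-dimensional or reducible over some $\overline{\mathbb{F}_p}$, in which case Lang--Weil tells you nothing about odd powers of $p$. (A smaller point: the form $B$ you write down has signature $(1,2)$ over $\mathbb{R}$, while KS sets live on the positive-definite form; over $\fq$ the two conics are projectively equivalent so this is harmless, but the configuration must be built for the Euclidean form, not for $B$.)

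The paper's proof avoids all of this by working directly over $\fq$. In the model $H_q\cong ER_q$, the vertices $z_{\alpha}$ satisfy $z_{\alpha}\sim(x,\alpha)$ for every $x$, and the apex $y$ is adjacent to every $z_{\alpha}$. Using the Weil bound on character sums, one finds (for every odd prime power $q>487$) elements $\alpha_1,\dots,\alpha_5\in\fq^{*}$ with every $\alpha_i+\alpha_j$ a nonzero square; then each triple $\{i,j,k\}$ yields an explicit triangle in $G_q$ on vertices of the form $(\cdot,\alpha_i),(\cdot,\alpha_j),(\cdot,\alpha_k)$. The subgraph on $y$, the five $z_{\alpha_i}$, and the $3\binom{5}{3}=30$ triangle vertices has $36$ vertices, and a two-line pigeonhole shows it is not $3$-colourable: if three of the $z_{\alpha_i}$ share a colour, only two colours remain for the corresponding triangle; so all three colours occur among the $z_{\alpha_i}$, and none is left for $y$. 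No reduction from characteristic zero, no external configuration, no irreducibility hypotheses --- everything is explicit and uniform in $q$.
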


If $\mathcal{F}$ is a family of graphs, we say that a graph $G$ is $\mathcal{F}$-free if $G$ does not contain a subgraph isomorphic to a graph in $\mathcal{F}$.  One of the most well-studied problems in extremal graph theory is to determine the maximum number of edges in an $\mathcal{F}$-free graph with $n$ vertices, and the describe the extremal $\mathcal{F}$-free graphs.
When $\mathcal{F}$ contains bipartite graphs, the structure of extremal $\mathcal{F}$-free graphs is not very well understood and there are few general results.  A notable exception is when $\mathcal{F} = \{ C_4 \}$.  A result of F\"{u}redi \cite{fu1996} states that when $q \geq 15$ is a prime power, an extremal $C_4$-free graph with $q^2 + q + 1$ vertices is an orthogonal polarity graph of a projective plane (see Section 5 for the definition of an orthogonal polarity graph).

Let $\mathcal{C}^r$ be the family of graphs with chromatic number $r$, and
$\mathcal{C}_k^r$ be the family of graphs with at most $k$ vertices and chromatic number $r$.
Theorem \ref{th:main theorem} is motivated by the following problem of Allen, Keevash, Sudakov, and Verstra\"{e}te \cite{aksv}.

\begin{problem}[Allen, et al.\ \cite{aksv}]\label{problem 1}
Let $\mathcal{F}$ be a family of bipartite graphs.  Determine if there is an integer $k$ such that
\[
\textup{ex}( n , \mathcal{F} \cup \mathcal{C}_k^r ) \sim \textup{ex}(n , \mathcal{F} \cup \mathcal{C}^r ).
\]
\end{problem}

When considering Problem \ref{problem 1}, a question that arises is if every extremal $\mathcal{F}$-free $n$-vertex graph (here $n$ is tending to infinity) must contain some member of $\mathcal{C}_k^r$?  In other words, does forbidding $\mathcal{C}_k^r$ actually have an effect on extremal $\mathcal{F}$-free graphs.
By Theorem \ref{th:main theorem}, one cannot take $ER_q$ to obtain a lower bound on the Tur\'{a}n number
$\textup{ex}( n , \{C_4 \} \cup \mathcal{C}_k^3 )$ for $k \geq 36$ without modifying $ER_q$ in some way.
It seems likely that for any integer $r \geq 5$, there exists integers $q_r$ and $f(r)$ such that for any $q \geq q_r$, the graph $ER_q$ contains a subgraph with at most $f(r)$ vertices and chromatic number at least $r$.

In Section 2 we prove Theorem \ref{th:gq}.  In Section 3 we prove Theorems \ref{th:chromatic number} and
\ref{th:chromatic number 2}.  In Section 4 we prove Theorem \ref{th:main theorem}.  Section 5 contains some concluding remakrs.


\section{Proof of Theorem \ref{th:gq}}

Let $q$ be a power of an odd prime power and $A = \{ ( a, a^2) : a \in \mathbb{F}_q \}$.  Let $\fq = \{ b_1 , \dots , b_q \}$ and assume that
$b_q = 0$.  Let $F = \{ b_q \} \times \fq$.  Then $F$ is a subgroup of $\fq^2$ and we let
\[
F_i = F + (b_i , 0)
\]
be the cosets of $F$ where $F_q = F$.  Add new vertices $z_1 , \dots , z_q , y$ to $G_q$.  Make
$z_i$ adjacent to all vertices in $F_i$, and make $y$ adjacent to each $z_i$.  Call this graph $H_q$.  Observe that $G_q$ is an induced subgraph of $H_q$.  We define an isomorphism $\phi$ from $H_q$ to $ER_q$ as follows.
\begin{enumerate}
\item For any $b_j \in \fq$, let $\phi ( (0,b_j)) = ( 1 , 0 , 2^{-1} b_j )$.
\item For any $b_i , b_j \in \fq$ with $b_i \neq 0$, let $\phi ( (b_i , b_j) ) = (1 , b_i , 2^{-1} ( b_j - b_i^2 ) )$.
\item Let $\phi (y) = (0,0,1)$ and $\phi ( z_i)  = (0,1,b_i)$ for $1 \leq i \leq q$.
\end{enumerate}

We will show that $\phi$ is an isomorphism by considering the different types of vertices in $H_q$.  Recall that the rule for
adjacency in $ER_q$ is that $(x_0 , x_1 , x_2)$ is adjacent to $(y_0 , y_1 , y_2)$ if and only if
$x_0 y_2 + x_2 y_0 = x_1 y_1$.

\noindent
\textbf{Case 1}: Vertices of type $(0,b_j)$.

Let $b_j \in \fq$.  In $H_q$, the neighborhood of $(0,b_j)$ is
$\{z_q \} \cup \{ ( x , x^2 - b_j ) : x \in \fq \}$.
In $ER_q$, the neighborhood of $(1, 0 , 2^{-1} b_j)$ is
\begin{equation}\label{iso eq 1}
\{ (0,1,0) \} \cup \{ ( 1 , x , - 2^{-1} b_j) : x \in \fq \}.
\end{equation}
By definition, $\phi ( ( 0 , -b_j) ) = (1 , 0 , -2^{-1} b_j )$ and for $x \neq 0$,
\[
\phi ( (x , x^2 - b_j ) ) = (1 , x  , 2^{-1}( x^2 - b_j - x^2 ) ) = ( 1 , x , -2^{-1} b_j ).
\]
This shows that (\ref{iso eq 1}) coincides with the set
\[
\{ \phi ( z_q ) \} \cup \{ \phi ( ( x , x^2 - b_j ) ) : x \in \fq \}.
\]
We conclude that for any $b_j \in \fq$, $(0,b_j)$ is adjacent to $u$ in $H_q$ if and only if
$\phi ( ( 0 , b_j) )$ is adjacent to $\phi (u)$ in $ER_q$.

\smallskip

\noindent
\textbf{Case 2}: Vertices of type $(b_i , b_j)$ with $b_i \neq 0$.

Let $b_i , b_j \in \fq$ with $b_i \neq 0$.  In $H_q$, the neighborhood of $(b_i , b_j)$ is
\[
\{ z_i  \} \cup \{ ( x - b_i , x^2 - b_j ) : x \in \fq \}.
\]
In $ER_q$, the neighborhood of $(1 , b_i , 2^{-1} ( b_j - b_i^2 ) )$ is
\begin{equation}\label{iso eq 2}
\{ (0,1,b_i) \} \cup \{ ( 1, x, xb_i - 2^{-1} ( b_j - b_i^2 ) ) : x \in \fq \}.
\end{equation}
We have $\phi (z_i) = (0,1,b_i)$ and
\[
\phi ( (b_i - b_i , b_i^2 - b_j )) = ( 1 , 0 , - 2^{-1} ( b_j - b_i^2 ) ).
\]
For $y \neq b_i$,
\begin{eqnarray*}
\phi ( ( y -b_i , y^2 - b_j ) ) & = & (1 , y - b_i  , 2^{-1} (y^2 - b_j - (y - b_i)^2 ) ) \\
& = & ( 1 , y - b_i , yb_i - 2^{-1} ( b_j + b_i^2) ).
\end{eqnarray*}
If we take $x = y - b_i$ in (\ref{iso eq 2}), we obtain
\[
( 1 , y - b_i , yb_i - 2^{-1} ( b_j +b_i^2) )
\]
using the fact that $2^{-1}  - 1 = - 2^{-1}$.  We conclude that for any $b_i , b_j \in \fq$ with $b_i \neq 0$,
$(b_i  ,b_j)$ is adjacent to $u$ in $H_q$ if and only if $\phi ( ( b_i , b_j) )$ is adjacent to $\phi (u)$ in $ER_q$.

\smallskip

\noindent
\textbf{Case 3}: Vertices of type $z_i$.

Let $1 \leq i \leq q$ and consider $z_i$.  The neighborhood of $z_i$ is $
\{ y \} \cup \{ (b_i , x) : x \in \fq \}$.
In $ER_q$, the neighborhood of $(0,1,b_i)$ is
\[
\{ (0,0,1) \} \cup \{ ( 1 , b_i , x ) : x \in \fq \} = \{ \phi (y) \} \cup \{ (1,b_i , x) : x \in \fq \}.
\]
If $i = q$, then $\phi ( (0,y) ) = (1 , 0 , 2^{-1} y )$.  If $i \neq q$, then
$\phi ((b_i, y) ) = (1 , b_i , 2^{-1} (y - b_i^2 ) )$.  As $y$ ranges over $\fq$, we obtain
$(1 , b_i ,   x )$ for all $x \in \fq$.

\smallskip
We have not checked the neighborhood condition for $y \in V(H_q )$ but since we have considered all other vertices, this is not necessary.


\section{Proof of Theorems \ref{th:chromatic number} and \ref{th:chromatic number 2} }

Throughout this section $p$ is an odd prime and $q$ is a power of $p$.  The set
$\fq^*$ consisting of the nonzero elements of $\fq$ can be partitioned into two sets
$\fq^+$ and $\fq^-$ where
\begin{center}
$a \in \fq^+$ if and only if $-a \in \fq^-$.
\end{center}
Observe that the vertices $(x_1 , x_2)$ and $(y_1 , y_2)$ are adjacent in $G_q$ if and only if 
$x_1 + y_1 = a$ and $x_2 + y_2 = a^2$ for some $a \in \fq$.  This is equivalent to
$(x_1 + y_1)^2 = x_2 + y_2$.  It is often this relation that we will use in our calculations.

\begin{lemma}\label{le:6.1 lemma}
(i) If $\mathbb{F}_{q^2} = \{ a \theta + b : a , b \in \fq \}$ for some $\theta \in \mathbb{F}_{q^2}\setminus \mathbb{F}_q$, then both
\begin{center}
$\{ ( x , y \theta + z) : x,z \in \fq , y \in \fq^+ \}$ and
$\{ (x , y  \theta + z) : x,z \in \fq , y \in \fq^- \}$
\end{center}
are independent sets in $G_q$.

\noindent
(ii) If $t \geq 3$ is odd and $\mathbb{F}_{q^t} = \{ a_0 + \dots + a_{t-1} \theta^{t-1} : a_i \in \fq \}$ for some
$\theta \in \mathbb{F}_{q^t}$, then both
\[
\{ (x_0 + \dots + x_{ (t-3)/2 } \theta^{ \frac{t-3}{2} } , y_0  + \dots + y_{t-1} ) : x_i , y_j \in \fq , y_{t-1} \in \fq^+ \}
\]
and
\[
\{ (x_0 + \dots + x_{ (t-3)/2 } \theta^{ \frac{t-3}{2} } , y_0  + \dots + y_{t-1} ) : x_i , y_j \in \fq , y_{t-1} \in \fq^- \}
\]
are independent sets in $G_q$.
\end{lemma}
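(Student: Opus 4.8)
The plan is to argue directly from the adjacency rule recorded above: two vertices $(u_1,u_2)$ and $(v_1,v_2)$ are adjacent precisely when $(u_1+v_1)^2=u_2+v_2$, the arithmetic taking place in the relevant field — $\mathbb{F}_{q^2}$ in part (i), $\mathbb{F}_{q^t}$ in part (ii). The only property of $\fq^+,\fq^-$ I shall need is that each is ``sum-free with respect to $0$'': if $y,y'\in\fq^+$ then $y+y'\neq 0$, since $y+y'=0$ would force $y'=-y\in\fq^-$, contradicting $\fq^+\cap\fq^-=\varnothing$; the same holds for $\fq^-$. This property is symmetric in $\fq^+$ and $\fq^-$, so it suffices to treat the two ``$\fq^+$'' sets, the ``$\fq^-$'' sets being handled verbatim.

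For part (i), I would regard $\mathbb{F}_{q^2}$ as an $\fq$-vector space with basis $\{1,\theta\}$ (the hypothesis says $\{1,\theta\}$ spans, hence is a basis), and let $\pi$ be the $\fq$-linear functional extracting the coefficient of $\theta$. Take two vertices $(x,y\theta+z)$ and $(x',y'\theta+z')$ with $y,y'\in\fq^+$. If they were adjacent we would have $(x+x')^2=(y+y')\theta+(z+z')$. Since $x,x'\in\fq$, the left-hand side lies in $\fq$, so $\pi\big((x+x')^2\big)=0$; applying $\pi$ to the right-hand side gives $y+y'=0$, which is impossible for $y,y'\in\fq^+$. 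Hence the set is independent.

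For part (ii), write $m=(t-3)/2\ge 0$ and regard $\mathbb{F}_{q^t}$ as an $\fq$-vector space with basis $\{1,\theta,\dots,\theta^{t-1}\}$; let $\pi$ extract the coefficient of $\theta^{t-1}$. The first coordinate of every vertex in our set lies in $W:=\mathrm{span}_{\fq}\{1,\theta,\dots,\theta^m\}$. The key observation is that squaring keeps $W$ in low degree: if $w=\sum_{i=0}^m c_i\theta^i\in W$, then $w^2=\sum_{k=0}^{2m}\big(\sum_{i+j=k}c_ic_j\big)\theta^k$, and since $2m=t-3<t-1$ the power $\theta^{t-1}$ does not occur, so $\pi(w^2)=0$. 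Now take vertices $(u,v)$ and $(u',v')$ of the set with $v=\sum_{j=0}^{t-1}y_j\theta^j$, $v'=\sum_{j=0}^{t-1}y_j'\theta^j$ and $y_{t-1},y_{t-1}'\in\fq^+$. If they were adjacent, then $(u+u')^2=v+v'$ with $u+u'\in W$; applying $\pi$ gives $0=y_{t-1}+y_{t-1}'$, again impossible. Hence the set is independent.

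The computations are short, so there is no serious obstacle; the one point that requires care is the degree bookkeeping in part (ii). One needs $2m=t-3\le t-1$ so that squaring an element of $W$ involves no reduction modulo the minimal polynomial of $\theta$ and genuinely annihilates the $\theta^{t-1}$-coordinate. This is exactly why the cutoff on the first coordinate is $\theta^{(t-3)/2}$ rather than $\theta^{(t-1)/2}$: with the larger cutoff the coefficient of $\theta^{t-1}$ in $w^2$ would be a single square $c_{(t-1)/2}^2$, and a sum of two squares can vanish in $\fq$, so the contradiction would be lost.
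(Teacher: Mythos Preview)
Your proof is correct and follows essentially the same approach as the paper: showing that $(u_1+v_1)^2$ has vanishing top coefficient (lies in $\fq$ for part (i), has zero $\theta^{t-1}$-coefficient for part (ii)) while $u_2+v_2$ does not, using that $y+y'\neq 0$ for $y,y'\in\fq^{\pm}$. The paper only spells out the first case of (i) and declares the rest ``very similar''; your explicit degree bookkeeping $2m=t-3<t-1$ for part (ii) is exactly the missing detail and is handled correctly.
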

\begin{proof}
We prove the first case of (i) as the proofs of the remaining statements are very similar.  Suppose
$(x_1 , y_1 \theta + z_1)$ and $(x_2 , y_2 \theta + z_2 )$ are vertices in $G_q$ with
$x_1 , x_2 , z_1 , z_2 \in \fq$ and $y_1 , y_2 \in \fq^+$.  Then
$(x_1 + x_2)^2 \in \fq$ but $(y_1 + y_2) \theta + (z_1 + z_2) \notin \fq$ since
$y_1 + y_2 \neq 0$.  Therefore, the vertices $(x_1 , y_1 \theta + z_1)$ and $(x_2 , y_2 \theta + z_2 )$ are not adjacent.
\end{proof}

\begin{lemma}\label{le:6.2 lemma}
For any $k \in \fq^*$, the maps $\psi_k , \phi_k : V( G_q ) \rightarrow V(G_q )$ given by
\begin{center}
$\psi_k ( (x,y) ) = ( x+ k , y  + 4kx + 2k^2)$ and $\phi_k ((x,y) )  = (kx , k^2y)$
\end{center}
are automorphisms of $G_q$.
\end{lemma}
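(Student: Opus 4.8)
The plan is to check, for each of $\psi_k$ and $\phi_k$, that (a) it is a bijection of $V(G_q) = \fq \times \fq$ onto itself and (b) it sends every edge of $G_q$ to an edge of $G_q$. These two facts suffice: a bijection of the vertex set of a finite graph that maps edges to edges must map the edge set onto itself (its image is a subset of the edge set of the same finite cardinality), hence it also reflects non-adjacency and is an automorphism. In particular no separate argument for non-edges is needed, and the fact that $G_q$ has no loops (adjacency requires distinct vertices) is automatically respected because the maps are injective.

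For bijectivity, $\phi_k(x,y) = (kx, k^2 y)$ is clearly invertible with inverse $(x,y) \mapsto (k^{-1} x, k^{-2} y)$, using $k \neq 0$. For $\psi_k$ I would note that it is an affine map of $\fq^2$ with linear part $\left(\begin{smallmatrix} 1 & 0 \\ 4k & 1 \end{smallmatrix}\right)$, which has determinant $1$; alternatively, a direct computation shows that $\psi_{-k}$ is the two-sided inverse of $\psi_k$.

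For the edge condition I would use the reformulation recorded just before the lemma: $(x_1, x_2)$ and $(y_1, y_2)$ are adjacent in $G_q$ if and only if $(x_1 + y_1)^2 = x_2 + y_2$. For $\phi_k$ this is immediate, since $(k x_1 + k y_1)^2 = k^2 (x_1+y_1)^2 = k^2(x_2 + y_2) = k^2 x_2 + k^2 y_2$. For $\psi_k$, the first coordinates of the two images sum to $(x_1+y_1) + 2k$, so the relevant square is $(x_1+y_1)^2 + 4k(x_1+y_1) + 4k^2$, while the second coordinates of the two images sum to $(x_2 + y_2) + 4k(x_1 + y_1) + 4k^2$; these are equal precisely because $(x_1+y_1)^2 = x_2 + y_2$.

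There is essentially no genuine obstacle here — the lemma is a routine verification — but it is worth flagging why $\psi_k$ has the shape it does. Squaring $(x_1+y_1)+2k$ produces the cross term $4k(x_1+y_1)$ together with the constant $(2k)^2 = 4k^2$, and $\psi_k$ is designed precisely so that the change it makes to the second coordinate absorbs exactly these two quantities; this is why a plain translation $(x,y)\mapsto(x+k, y+c)$ is \emph{not} an automorphism of $G_q$. (The hypothesis that $p$ is odd is used implicitly, since one needs $2 \neq 0$ in $\fq$ for this bookkeeping.)
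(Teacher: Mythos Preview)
Your proof is correct and follows essentially the same approach as the paper: both verify adjacency preservation via the reformulation $(x_1+y_1)^2 = x_2+y_2$, with the same computations for $\psi_k$ and $\phi_k$. You add an explicit bijectivity check and replace the paper's ``conversely'' direction by the finite-graph observation that an edge-preserving bijection must be an automorphism, which is a harmless stylistic variation.
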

\begin{proof}
Let $k \in \mathbb{F}_{q}^*$.  Suppose $(x_1 , x_2)$ is adjacent to $(y_1 , y_2)$ so that
$(x_1 + y_1)^2 = x_2 + y_2$.  In this case,
\begin{eqnarray*}
(x_1 + k + y_1 + k )^2 & = & (x_1 + y_1)^2 + 4kx_1 + 4k y_1 + 4k^2 \\
& = & ( x_2 + 4kx_1 + 2k^2) + (y_2 + 4k y_1 + 2k^2).
\end{eqnarray*}
This shows that $(x_1 + k , x_2 + 4kx_1 + 2k^2)$ is adjacent to $(y_1 + k , y_2 + 4ky_1 + 2k^2)$.  Conversely,
if $(x_1 + k , x_2 + 4kx_1 + 2k^2)$ is adjacent to $( y_1 + k, y_2 + 4ky_1 + 2k^2)$, then it must be the case that
$(x_1 + y_1)^2 = x_2 + y_2$ and so $(x_1 , y_1)$ is adjacent to $(x_2 , y_2)$.

To show that $\phi_k$ is an isomorphism it is enough to observe that
$(x_1  + y_1)^2 = x_2 + y_2$ is equivalent to $( kx_1 + ky_1)^2 = k^2 x_2 + k^2 y_2$.
\end{proof}


\subsection{$q$ a square}

In this section we prove the following.

\begin{theorem}\label{square chrom}
Let $q$ be a power of an odd prime.  The chromatic number of $G_{q^2}$ satisfies
\[
\chi ( G_{q^2} ) \leq 2 q + O( q / \log q) .
\]
\end{theorem}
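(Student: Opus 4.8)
The plan is to build a proper coloring of $G_{q^2}$ out of the independent sets supplied by Lemma \ref{le:6.1 lemma}(i), applied with the base field $\fq$ replaced by $\mathbb{F}_{q^2}$, so that the ground field of $G_{q^2}$ is viewed as a degree-$2$ extension $\mathbb{F}_{q^2} = \{a\theta + b : a,b \in \fq\}$ for a fixed $\theta \in \mathbb{F}_{q^2}\setminus \fq$. Writing each coordinate of a vertex of $G_{q^2}$ in the form $u\theta + v$ with $u,v \in \fq$, Lemma \ref{le:6.1 lemma}(i) tells us that fixing the "$\theta$-part" $y$ of the \emph{second} coordinate to lie in a prescribed coset-like slice gives an independent set; more precisely, for each fixed nonzero value $y_0 \in \fq^*$ the set of vertices whose second coordinate has $\theta$-coefficient exactly $y_0$ is independent (two such vertices would need $(x_1+x_2)^2 \in \fq$, which is fine, but also the second coordinates to sum to that square, forcing the $\theta$-parts to cancel, i.e. $y_0 + y_0 = 0$, impossible in odd characteristic). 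That immediately partitions \emph{most} of $V(G_{q^2})$ into $q-1$ independent sets, one for each $y_0 \in \fq^*$. What remains uncolored is the set $W$ of vertices whose second coordinate lies entirely in $\fq$ (i.e. has zero $\theta$-part), a set of size $q^2 \cdot q = q^3$ sitting inside the $q^4$-vertex graph.

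Next I would analyze the induced subgraph $G_{q^2}[W]$ and show it is itself essentially a copy of (a blow-up of, or something closely related to) $G_q$, or at least that it has chromatic number $O(q)$. The point is that on $W$ the adjacency relation $(x_1+x_2)^2 = x_2\text{-coords sum}$ degenerates: since the second coordinates lie in $\fq$, for two vertices of $W$ to be adjacent we need $(x_1+x_2)^2 \in \fq$ where now $x_1, x_2 \in \mathbb{F}_{q^2}$ are the first coordinates; writing $x_i = u_i\theta + v_i$, the condition $(x_1+x_2)^2 \in \fq$ is a single $\fq$-linear-type constraint on $u_1+u_2$ (roughly $(u_1+u_2)^2\theta^2 + 2(u_1+u_2)(v_1+v_2)\theta \in \fq$, which after using a minimal polynomial $\theta^2 = c\theta + d$ becomes a condition pinning down $u_1+u_2$). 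So $W$ fibers over the "$u$-part of the first coordinate" and the constraint essentially reduces adjacency within $W$ to the $G_q$-type relation on the remaining $\fq$-coordinates. Invoking Theorem \ref{square chrom}'s statement recursively is circular, so instead I would bound $\chi(G_{q^2}[W])$ directly: either it is bipartite-like / has bounded chromatic number, or at worst $\chi(G_{q^2}[W]) \le O(q)$ by a crude argument (e.g. it is $O(q)$-degenerate, or Lemma \ref{le:6.1 lemma} applies once more after a change of variables). Combining, $\chi(G_{q^2}) \le (q-1) + O(q) = O(q)$ — but to get the sharp leading constant $2q$ one must be more careful, so I expect the correct bookkeeping gives $(q-1)$ colors for the generic slices plus at most $q + O(q/\log q)$ for $W$.

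To sharpen the constant to $2q + O(q/\log q)$ rather than merely $O(q)$, the idea is that $G_{q^2}[W]$ should be colorable with roughly $q$ colors: one shows $W$ decomposes into $\fq^+$- and $\fq^-$-type independent pieces again by looking at the \emph{first} coordinate's $\theta$-part instead of the second, using that within $W$ adjacency forces a relation on $u_1+u_2$ that—for $u_1, u_2$ in a fixed half—cannot be satisfied, plus a small exceptional set of size $O(q^2)$ which is itself $G_q$-like and gets colored with $\sqrt{|V|} \cdot (1+o(1)) = O(q)$ colors via the probabilistic result of Alon–Krivelevich–Sudakov \cite{aks} (this is where the $O(q/\log q)$ lower-order term enters, mirroring Theorem \ref{th:chromatic number}). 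So the global count is: $q-1$ colors from the second-coordinate slices, about $q$ colors from the first-coordinate slices inside $W$, and the genuinely leftover vertices (a $G_q$-like graph on $O(q^2)$ vertices) absorbed into the $O(q/\log q)$ term by reusing colors.

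The main obstacle I anticipate is the structure of the leftover set $W$ and showing it does not itself require $\sim 2q$ colors in a way that would push the total to $\sim 4q$. One has to exploit both coordinates' $\theta$-decompositions in tandem and verify that the "diagonal" vertices — those whose first \emph{and} second coordinates are both in $\fq$, forming a genuine $G_q$ — are the only part not handled by the linear slicing, and that these can share colors with slices already used (or be pushed into the error term via \cite{aks}). Getting the automorphisms of Lemma \ref{le:6.2 lemma} to line up the two slicings compatibly, so that the union of $q-1$ "second-coordinate" color classes and $q$ "first-coordinate" color classes is genuinely a proper coloring of all of $V(G_{q^2})$ with no double-counting blow-up, is the delicate combinatorial point.
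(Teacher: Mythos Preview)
Your first step contains a genuine error. You claim that for each fixed $y_0 \in \fq^*$, the set of all vertices $(x, y_0\theta + z)$ with $x \in \mathbb{F}_{q^2}$ and $z \in \fq$ is independent, arguing that adjacency would force the $\theta$-parts of the second coordinates to cancel. But adjacency means $(x_1+x_2)^2 = 2y_0\theta + (z_1+z_2)$, and since $x_1, x_2$ range over \emph{all} of $\mathbb{F}_{q^2}$, the square $(x_1+x_2)^2$ need not lie in $\fq$; it can perfectly well have $\theta$-coefficient $2y_0$. (Concretely, pick any $a \in \mathbb{F}_{q^2}$ whose square has $\theta$-coefficient $2y_0$ --- such $a$ exist --- and set $x_1=a$, $x_2=0$; then choose $z_1, z_2$ to match the constant part.) So your ``$q-1$ independent slices'' are not independent, and the whole decomposition collapses.

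This is exactly why Lemma \ref{le:6.1 lemma}(i) restricts the first coordinate to $\fq$, not $\mathbb{F}_{q^2}$: only then is $(x_1+x_2)^2$ guaranteed to lie in $\fq$. The paper's proof takes $J = I^+ \cup I^-$ with first coordinate in $\fq$ (so $J$ is $2$-colorable but covers only a $1/q$ fraction of vertices), and then applies the automorphisms $\psi_{k\theta}$ for all $k \in \fq$ to translate $J$ across the missing $\theta$-part of the first coordinate. This yields a set $S = \bigcup_{k \in \fq} \psi_{k\theta}(J)$ colorable with $2q$ colors. The crucial point is that $\psi_{k\theta}$ also twists the second coordinate, so the leftover $X = V(G_{q^2}) \setminus S$ is not simply ``second coordinate in $\fq$'' but rather those $(s,t)$ with $t_1 = 4s_0 s_1 + 2s_1^2\mu_1$ (a single polynomial constraint). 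The paper then proves, via two structural claims, that $G_{q^2}[X]$ has maximum degree at most $2q-1$, and finishes with the Alon--Krivelevich--Sudakov bound to get $\chi(G_{q^2}[X]) = O(q/\log q)$.

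Your later discussion of the leftover set $W$ and the ``first-coordinate slicing'' is too vague to assess, but since it is built on the incorrect first step it would need to be reworked from scratch. The missing idea is the use of the automorphisms $\psi_{k\theta}$ of Lemma \ref{le:6.2 lemma} to move the restricted independent sets around, rather than attempting a direct coordinate-slice partition.
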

\begin{proof}
Let $\theta$ be a root of an irreducible quadratic polynomial in $\fq [x]$ so that
$\mathbb{F}_{q^2} = \{ a \theta + b : a , b, \in \fq \}$.  Assume that
$\theta^2 = \mu_1 \theta + \mu_0$ where $\mu_0 , \mu_1 \in \fq$.
Let $I^+ = \{ ( x , y \theta + z) : x , z \in \fq , y \in \fq^+ \}$,
$I^- = \{ (x , y \theta + z) : x , z \in \fq , y \in \fq^- \}$, and $J = I^+ \cup I^-$.  By Lemma
\ref{le:6.1 lemma}, $J$ is the union of two independent sets and so $\chi ( G_{q^2} [J] ) \leq 2$.
Let
\[
S = \bigcup_{ k \in \fq } \psi_{ k \theta } (J).
\]
By Lemma \ref{le:6.2 lemma}, each $\psi_{k \theta }$ is an isomorphism and so
$\chi( G_{q^2} [S] ) \leq 2q$.  Let $X = V(G_{q^2} ) \backslash S$.  Since
$\fq^+ \cup \fq^- = \fq^*$, we can write
\[
S = \{ ( x + k \theta , y \theta + z +  4k \theta x + 2 k^2 \theta^2 ) : x , k , y , z \in \fq , y \neq 0 \}.
\]
Given a vertex $(s,t) \in V(G_{q^2} )$, say with $s = s_0 + s_1 \theta$ and $t = t_0 + t_1 \theta $, we can take
$x = s_0$ and $k = s_1$ to obtain
\[
\{ (s_0 + s_1 \theta , y \theta + z + 4s_1 s_0 \theta + 2s_1^2 ( \mu_1 \theta + \mu_0 ) ) : y,z \in \fq , y \neq 0 \} \subset S.
\]
The second coordinate in the above subset of $S$ simplifies to
\[
(z + 2s_1^2 \mu_0 ) + ( y + 4s_1 s_0 + 2s_1^2 \mu_1) \theta.
\]
We can choose $z = t_0 - 2s_1^2 \mu_0$ and as long as
$t_1 \neq 4s_1 s_0 + 2s_1^2 \mu_1$, we can take
$y = t_1 - 4s_1 s_0 - 2s_1 ^2 \mu_1$.  Otherwise, $t_1 = 4s_1 s_0 + 2s_1^2 \mu_1$ and so
\[
X = \{ (s_0 + s_1 \theta , t_0 + (4s_1 s_0  + 2s_1^2 \mu_1) \theta ) : s_0 , s_1 , t_0 \in \fq \}.
\]
Partition $X$ into $q$ sets $X_s$ where $s \in \fq$ and
\[
X_{s} = \{ ( s \theta + s_2 , (2s^2 \mu_1 + 4s s_2 ) \theta + t_2 : s_2 , t_2 \in \fq \}.
\]

\smallskip

\noindent
\textbf{Claim 1}: For any $s \in \fq$, $\ \Delta ( G_{q^2} [ X_s ] ) \leq q$.

\smallskip

Let $s \in \fq$.  A pair of vertices
\begin{center}
$( s \theta + s_2 , (2s^2 \mu_1 + 4s s_2) \theta + t_2 )$ and
$( s \theta + u_2 , (2s^2 \mu_1 + 4s u_2) \theta + v_2)$,
\end{center}
both in $X_s$, are adjacent if and only if
$4s^2 \mu_2 + (s_2 + u_2)^2 = t_2 + v_2$.  If $s_2$ and $t_2$ are fixed, then there are $q$ choices for $u_2$ and
once $u_2$ is fixed, $v_2$ is determined.  Therefore, the maximum degree of $G_{q^2}[X_s]$ is $q$.

\smallskip

\noindent
\textbf{Claim 2:} $\Delta ( G_{q^2} [X] ) \leq 2q-1$.

\smallskip

By Claim 1, a vertex in $X_s$ has at most $q$ other neighbors in $X_s$.
Let $s ,t \in \fq$ where $s \neq t$.  The vertex
$(s \theta + s_2 , (2 s^2 \mu_1 + 4s s_2 ) \theta + t_2) \in X_s$ is adjacent to the vertex
$(t \theta + u_2 , (2 t^2 \mu_1 + 4t u_2 ) \theta + v_2) \in X_t$ if and only if
\begin{equation}\label{claim 4 eq 1}
\mu_1 (s^2 + 2st + t^2) + 2 (s + t) s_2 + 2 (s + t) u_2 = \mu_1 ( 2s^2  + 2t^2) + 4s s_2 + 4t u_2
\end{equation}
and
\begin{equation}\label{claim 4 eq 2}
(s+t)^2 \mu_0 + (s_2 + u_2)^2 = t_2 + v_2.
\end{equation}
Equation (\ref{claim 4 eq 1}) can be rewritten as
\begin{equation}\label{claim 4 eq 3}
\mu_1 ( s- t)^2 = 2 (t - s) s_2 + 2(s - t) u_2.
\end{equation}
Thus if $s_2$ and $t_2$ are fixed, then (\ref{claim 4 eq 3}) and (\ref{claim 4 eq 2}) determine $u_2$ and $v_2$
since $2 (  s - t ) \neq 0$.  This shows that
a vertex in $X_s$ has exactly one neighbor in $X_t$ whenever $s \neq t$.  Namely, given
the vertex $(s \theta + s_2 , (2 s^2 \mu_1 + 4s s_2  ) \theta + t_2) \in X_s$, its unique neighbor
in $X_t$ where $t \neq s$ is $(t \theta + u_2 , (2 t^2 \mu_1 + 4t u_2 ) \theta + v_2)$ where
\begin{center}
$u_2 = 2^{-1} \mu_1 (s - t) + s_2$ and $v_2 = (s + t)^2 \mu_0 + (2s_2 + 2^{-1} \mu_1 ( s - t))^2 - t_2$.
\end{center}
We conclude that a vertex $x \in X$ has at most $q$ neighbors in $X_s$ when $x \in X_s$, and one neighbor in each
$X_t$ for $t \neq s$.  Since $X = \cup_{s \in \fq } X_s$, we have proved Claim 2.

\smallskip

Alon, Krivelevich, and Sudakov \cite{aks} proved that any graph with maximum degree $d$ with the property that the neighborhood of every vertex contains at most $d^2 / f $ edges has chromatic number at most
$c ( d / \log f)$ where $c$ is an absolute constant.  A $C_4$-free graph with maximum degree $d$ has the property that the neighborhood of every vertex contains at most $d / 2 $ edges.  Applying the result of \cite{aks} to $G_{q^2}[X]$, we obtain
$ \chi ( G_{q^2}[X] ) = O( 2q  / \log q )$.
Combining this coloring with our coloring of $S$, we obtain a proper coloring of $G_{q^2}$ with $2q + O( q / \log q)$ colors.
\end{proof}

\smallskip

To obtain a coloring of $ER_{q^2} \cong H_{q^2}$, we only need one additional color for the vertices
$z_1 , \dots , z_{q^2},y$.  The vertices $z_1 , \dots , z_{q^2}$ form an independent set in $H_{q^2}$ and so we use one new color on these vertices.  The vertex $y$ has no neighbors in $G_{q^2}$ and so we may use any one of the $2q + O ( q /  \log q )$ colors used to $G_{q^2}$ to color $y$.  This proves Theorem \ref{th:chromatic number}.


\subsection{$q$ not a square}

In this subsection we prove the following result.
\begin{theorem}\label{th:gq coloring}
Let $q$ be a power of an odd prime.  If $r \geq 1$ and for some $\mu \in \fq$, the polynomial
$x^{2r+1} - \mu \in \fq [x]$ is irreducible, then
\[
\chi ( G_{q^t} ) \leq \frac{2r+5}{3} q^{ \frac{4r}{3}+1 }+(2r+1)q^{r+1}.
\]
\end{theorem}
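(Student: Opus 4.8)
The plan is to follow the strategy of Theorem~\ref{square chrom} --- extract a $2$-colourable ``slab'' via Lemma~\ref{le:6.1 lemma}(ii), spread it around $G_{q^{t}}$ using the automorphisms of Lemma~\ref{le:6.2 lemma}, and handle the leftover separately --- but now iterated, since one layer of translates does not suffice when $t=2r+1$ is large. Write $t=2r+1$. Because $x^{t}-\mu$ is irreducible over $\fq$, a root $\theta$ gives a basis $1,\theta,\dots,\theta^{t-1}$ of $\mathbb F_{q^{t}}/\fq$ with the crucial property $\theta^{t}=\mu\in\fq^{*}$: products of low powers of $\theta$ stay of low degree while high powers wrap around by a scalar. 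For $1\le w\le r$ set $W_w=\langle 1,\theta,\dots,\theta^{w-1}\rangle$. The structural input, generalising Lemma~\ref{le:6.1 lemma}(ii), is this: if $x_1,y_1\in W_w$ then $(x_1+y_1)^{2}\in\langle 1,\dots,\theta^{2w-2}\rangle$, so its coefficients on $\theta^{2w-1},\dots,\theta^{t-1}$ all vanish; hence, calling the coefficient vector of an element on $\theta^{2w-1},\dots,\theta^{t-1}$ its \emph{tail}, the vertices of $G_{q^{t}}$ whose first coordinate lies in $W_w$ and whose second coordinate has a fixed nonzero tail form an independent set. Grouping the nonzero tails into a set $P$ and its negative $-P$ (possible since $q$ is odd), the whole block of vertices with first coordinate in $W_w$ and nonzero tail is $2$-colourable.

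\medskip

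I would then translate this block by the maps $\psi_k$ of Lemma~\ref{le:6.2 lemma} with $k$ ranging over a subspace $T_w$ complementary to $W_w$: the translates' first coordinates run over all of $\mathbb F_{q^{t}}$, so $G_{q^{t}}$ is properly coloured with $2q^{\,t-w}$ colours off a set $Z$ on which the top $t-2w+1$ coordinates of each vertex's second coordinate are pinned to a function of its first coordinate. Choosing $T_w$ to be a linear subspace makes these pinning functions quadratic forms, and makes the next layer of translates land inside $Z$; this step is where $\theta^{t}=\mu\in\fq$ gets used. Inside $Z$ one repeats the construction with a smaller $W_{w'}$ ($w'<w$): a vertex of $Z$ with first coordinate in $W_{w'}$ automatically has the already-pinned coordinates equal to $0$, so a fresh $2$-colourable block can be peeled off of the next segment of coordinates. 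Iterating yields a decreasing sequence $r\ge w_1>\dots>w_L\ge 1$, nested leftovers $Z^{(1)}\supseteq\cdots\supseteq Z^{(L)}$, layer $i$ costing $2q^{\,t-w_i}$ colours, and a final leftover $Z^{(L)}$ on which $t-2w_L+1$ coordinates of each vertex are pinned to quadratic forms in the other coordinate.

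\medskip

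Finally $Z^{(L)}$ is coloured greedily: a neighbour of $(s_0,u_0)$ in $G_{q^{t}}[Z^{(L)}]$ is determined by its first coordinate $s'$, which must satisfy the pinning equations --- a system of $t-2w_L+1$ quadratic equations in $s'\in\mathbb F_{q^{t}}\cong\fq^{\,t}$ --- so the number of such $s'$, hence the maximum degree of $G_{q^{t}}[Z^{(L)}]$ and hence $\chi(G_{q^{t}}[Z^{(L)}])$, is $O(q^{\,2w_L-1})$ (by an explicit count, or Lang--Weil). Summing the layers (a geometric-type sum dominated by $2q^{\,t-w_L}$) with the leftover contribution and choosing the integers $w_1>\dots>w_L$ to balance $t-w_L$ against $2w_L-1$ --- so $w_L$ lies near $\tfrac23 r$ --- produces $\chi(G_{q^{t}})\le\tfrac{2r+5}{3}q^{\,4r/3+1}+(2r+1)q^{\,r+1}$, the $(2r+1)q^{\,r+1}$ term coming from the coarsest layer ($w_1=r$). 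The main obstacle is the bookkeeping of the iteration --- arranging the transversals so that each layer of translates sits inside the current leftover and the pinning functions stay quadratic --- together with making the degree bound on $Z^{(L)}$ hold for \emph{every} vertex: the danger is vertices at which the quadratic system degenerates, which must be shown absent or absorbed into an extra layer. Extracting the precise constant $\tfrac{2r+5}{3}$ and exponent $\tfrac{4r}{3}+1$ is then a matter of optimising these integer parameters and carefully tracking the $O(\cdot)$ terms.
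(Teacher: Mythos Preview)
Your proposal and the paper's proof share the same opening move (the $2$-colourable slab from Lemma~\ref{le:6.1 lemma}(ii), spread by the translations $\psi_k$), but diverge sharply at the second step. You try to reach the final leftover by \emph{iterating} additive translations with a decreasing sequence of widths $w_1>\dots>w_L$; the paper instead uses a \emph{single} additive layer with $w=r$ and then brings in the second family of automorphisms from Lemma~\ref{le:6.2 lemma}, the multiplicative maps $\phi_{\theta^l}(x,y)=(\theta^l x,\theta^{2l}y)$. For each $l=1,\dots,2r$ the set $\phi_{\theta^l}^{-1}(S)\cap X$ is isomorphic to a subgraph of $S$ and hence $2q^{r+1}$-colourable; the residual set $Z=X\cap\phi_\theta(X)\cap\dots\cap\phi_{\theta^{2r}}(X)$ then has \emph{all} $2r+1$ coordinates $t_0,\dots,t_{2r}$ pinned as explicit quadratic forms in $s_0,\dots,s_{2r}$. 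The irreducibility hypothesis $\theta^{2r+1}=\mu\in\fq$ is exactly what makes $\phi_{\theta^l}$ cyclically permute coordinates (up to scalars), so each rotation pins a new $t_j$. This is the step you are missing, and it replaces your entire iteration with $2r$ cheap applications of an automorphism.

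Your iteration, by contrast, is where the real work would lie, and you have not done it. Two concrete issues: (i) at layer $i+1$ the translates $\psi_k$ of the new block need not sit inside $Z^{(i)}$, and although you can still colour $\psi_k(B)\cap Z^{(i)}$, showing that the \emph{complement} in $Z^{(i)}$ acquires the next batch of pinned coordinates requires checking that the layer-$(i{+}1)$ pinning functions are compatible with the earlier ones (they use different transversals $T_{w_i}$); (ii) your degree bound $O(q^{2w_L-1})$ on $Z^{(L)}$ is a naive dimension count. In the paper this step is Lemma~\ref{le:sams lemma}, which is not soft: it exploits the Toeplitz structure of the coefficient matrix arising from the \emph{full} system of $2r+1$ equations to show that some $m\approx\tfrac{2(r+1)}{3}$ of the $z_i$ are determined by the rest, and this structure comes precisely from the cyclic action of $\phi_{\theta^l}$. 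With only $t-2w_L+1$ equations from your additive layers, you have neither the full system nor an argument that the degenerate loci are small. So the exponent $\tfrac{4r}{3}+1$ and the constant $\tfrac{2r+5}{3}$ in the statement are not outputs of an optimisation over your parameters $w_i$; they are outputs of the linear-algebra lemma applied to the specific quadratic system produced by the $\phi_{\theta^l}$.
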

\begin{proof}
Suppose there is a
$\mu \in \fq^*$ such that the polynomial $x^t - \mu \in \fq [x]$ is irreducible.  Let $\theta$ be a root of $x^t - \mu$ in an extension field of $\fq$.  We may view $\theta$ as an element of $\mathbb{F}_{q^t}$ and $\{1, \theta , \dots , \theta^{2r} \}$ is a basis for
$\mathbb{F}_{q^t}$ over $\fq$.
For $2r+1 \leq l \leq 6r+3$,
\[
\theta^l =
\left\{
\begin{array}{ll}
\mu \theta^{l - 2r-1} & \mbox{if $2r+1 \leq l < 4r+2$}, \\
\mu^2 \theta^{l - 4r-2} & \mbox{if $4r + 2 \leq l < 6r + 3$}.
\end{array}
\right.
\]
This identity will be used frequently throughout this subsection.
Define
\[
I^+ = \{ ( x_0 + x_1 \theta + \dots + x_{r-1} \theta^{r-1} , y_0 + \dots + y_{2r} \theta^{2r} ) : x_i , y_j \in \fq, y_{2r} \in \fq^+ \}
\]
and
\[
I^- = \{ ( x_0 + x_1 \theta + \dots + x_{r-1} \theta^{r-1} , y_0 + \dots + y_{2r} \theta^{2r} ) : x_i , y_j \in \fq, y_{2r} \in \fq^- \}.
\]
By Lemma \ref{le:6.1 lemma}, both $I^+$ and $I^-$ are independent sets.
Let $J = I^+ \cup I^-$.  Since $J$ is the union of two independent sets, $\chi ( G_{q^t} [J] ) \leq 2$.  For
$k \in \mathbb{F}_{q^t}$, the map $\psi_{k} ( (x,y) ) = (x + k , y + 4kx + 2k^2)$ is an isomorphism of $G_{q^t}$
by Lemma \ref{le:6.2 lemma}.  Let
\[
S = \bigcup_{ (x_r , \dots , x_{2r} ) \in \fq^{r+1} } \psi_{ x_r \theta^r + \dots + x_{2r} \theta^{2r} } (J).
\]
We properly color the vertices of $S$ with at most $2q^{r+1}$ colors.  Let $X = \mathbb{F}_{q^t}^2 \backslash S$.  It remains to color the
vertices in $X$.  To do this, we will proceed as follows.  By Lemma \ref{le:6.2 lemma}, for any $k \in \mathbb{F}_{q^t}^*$, the map
$\phi_{k} ( ( x, y) ) = (kx , k^2 y )$ is an isomorphism of $G_{q^t}$.  Let $1 \leq l \leq 2r$ and consider
$\phi_{ \theta^l }(X)$.  Let $Y_l = S \cap \phi_{ \theta^l} (X)$.  The
graph $G_{q^t}[ Y_l]$ is isomorphic to a subgraph of $G_{q^t} [S]$.  We have shown that
$\chi ( G_{q^t} [S] ) \leq 2q^{r+1}$ and so $\chi ( G_{q^t} [Y_l] ) \leq 2q^{r + 1}$ for any $1 \leq l \leq 2r$.
Therefore, we can properly color the vertices in
\[
\phi_{ \theta^l}^{-1} (Y_l) = \phi_{ \theta^{l}}^{-1} (S) \cap X
\]
with at most $2q^{r+1}$ colors.  This gives a proper coloring that uses at most $(2r+1) 2q^{r+1}$ colors.  The only vertices that have not been colored are those that are in the set
\[
Z:= X \cap \phi_{ \theta} (X) \cap \phi_{ \theta^2}(X) \cap \dots \cap \phi_{ \theta^{2r} } (X).
\]
We are now going to show that if $(s_0 + \dots + s_{2r} \theta^{2r} , t_0 + \dots + t_{2r} \theta^{2r} ) \in Z$, then
each $t_i$ is determined by $s_0 , \dots, s_{2r}$.  This will allow us to prove an upper bound
on the maximum degree of $G_{q^t}[Z]$ and we can then color $G_{q^t}[Z]$ by applying Brooks' Theorem.

We will use the following notation for the rest of this section.  If $s \in \mathbb{F}_{q^t}$, then $s_0 , \dots , s_{2r}$ will be the coefficients of $s$ in the unique representation
$s = s_0 + s_1 \theta + \dots + s_{2r} \theta^{2r}$ where $s_i \in \fq$.
Given a $2r+1$-tuple $(z_0 , z_1 , \dots , z_{2r} ) \in \fq^{2r+1}$, define
\[
\alpha ( z_0 , z_1 , \dots , z_{2r} ) = 2z_r^2 + 4 \sum_{j = 0}^{r - 1} z_j z_{2r - j}.
\]

\medskip

\noindent
\textbf{Claim 1}: If $(s_0 + \dots + s_{2r} \theta^{2r} , t_0 + \dots + t_{2r} \theta^{2r} ) \in X$, then
\[
t_{2r} = \alpha ( s_0 , s_1 , \dots , s_{2r} ).
\]

\smallskip

\begin{proof}[Proof of Claim 1]
A vertex in $S$ is of the form

\smallskip

\noindent
~~~~~$(x_0 + \dots + x_{r-1} \theta^{r-1} + x_r \theta^r  + \dots + x_{2r} \theta^{2r} , y_0 + \dots + y_{2r} \theta^{2r}$

\smallskip
\noindent
~~~~~~~~~~$+ 4 (x_r \theta^r + \dots  + x_{2r} \theta^{2r} )( x_0 + \dots + x_{r-1} \theta^{r-1} ) + 2(x_r \theta^r + \dots +
x_{2r} \theta^{2r} )^2 )$

\smallskip

\noindent
for some $x_i , y_j \in \fq$, and $y_{2r} \in \fq^+ \cup \fq^- = \fq^*$.  The coefficient of $\theta^{2r}$ in the second coordinate
is
\[
y_{2r} + 2x_r^2 + 4 \sum_{j=0}^{r-1} x_j x_{2r - j}.
\]
Thus, given any vertex $(s,t) \in \mathbb{F}_{q^t}^2$, we have that $(s,t) \in S$ unless
\[
t_{2r} = 2s_r^2 + \sum_{j=0}^{r-1} s_j s_{2r - j}.
\]
\end{proof}

\medskip

\noindent
\textbf{Claim 2}: If $1 \leq l \leq 2r$ and $(s,t) \in X \cap \phi_{ \theta^l} (X)$, then
\[
t_{2l-1} = \mu \alpha (s_l , s_{l+1} , \dots , s_{2r} , \mu^{-1} s_0 , \dots , \mu^{-1} s_{l-1} ) ~~ \mbox{if $1 \leq l \leq r$},
\]
and
\[
t_{2l - 2r - 2}  = \mu^2 \alpha (s_l , s_{l+1} , \dots , s_{2r} , \mu^{-1} s_0 , \dots , \mu^{-1} s_{l-1} ) ~~ \mbox{if $r+1 \leq l \leq 2r$}.
\]

\smallskip
\begin{proof}[Proof of Claim 2]
Suppose $(s,t) \in X \cap \phi_{ \theta^l }(X)$.  There is an $(x,y) \in X$ such that
$(s,t) = \phi_{ \theta^l }((x,y))$.  From the equation $(s,t) = ( \theta^l x , \theta^{2l} y )$ we obtain by equating coefficients of
$\theta^0 , \theta^1 , \dots , \theta^{2r}$ in the first component,
\begin{equation}\label{eq:1.2}
x_i = s_{l + i} ~ \mbox{for $0 \leq i \leq 2r - l$ ~and} ~~\mu x_i = s_{i - 2r + l - 1} ~\mbox{for $2r - l + 1 \leq i \leq 2r$}.
\end{equation}
If $1 \leq l \leq r$, then we obtain $t_{2l-1} = \mu y_{2r}$ by considering the coefficient of $\theta^{2l-1}$ in the second component.
Similarly, if $r+1 \leq l \leq 2r$, we obtain $t_{2l - 2r  - 2} = \mu^2 y_{2r}$ by considering the coefficient of
$\theta^{2l - 2r - 2}$ in the second component.
Since $(x,y) \in X$, we have by Claim 1 that
\begin{equation}\label{eq:1.3}
y_{2r} = \alpha (x_0 , x_1 , \dots , x_{2r} ).
\end{equation}
Using (\ref{eq:1.2}), we can solve for the $x_i$'s in terms of the $s_j$'s and then substitute into (\ref{eq:1.3}) to complete the proof of Claim 2.
\end{proof}

\medskip

For $0 \leq k \leq 2r$, let
\[
U_k = \{ \{i,j \} \subset \{0,1, \dots , 2r \} : i + j \equiv k ( \textup{mod}~2r+1 ) \}.
\]
Given $\{i,j \} \subset \{0,1, \dots , 2r \}$, let
\[
\mu_{ \{ i , j \} } =
\left\{
\begin{array}{ll}
1 & \mbox{if $1 \leq i + j \leq 2r$}, \\
\mu & \mbox{if $2r+1 \leq i + j \leq 4r-1$}.
\end{array}
\right.
\]

\medskip

\noindent
\textbf{Claim 3}: Suppose $(s,t) \in Z$.  If $1 \leq l \leq r$, then
\[
t_{2l-1} = 2 \mu s_{l+r}^2 + 4 \sum_{ \{i,j \} \in U_{2l-1} } \mu_{ \{i,j \} } s_i s_j.
\]
If $0 \leq l \leq r - 1$, then
\[
t_{2l} = 2 s_l^2 + 4 \sum_{ \{i,j \} \in U_{2l} } \mu_{ \{i,j \} } s_i s_j.
\]

\smallskip

\begin{proof}[Proof of Claim 3]
First suppose $1 \leq l \leq r$.  By Claim 2,
\[
t_{2l-1} = \mu \alpha (s_l , s_{l+1} , \dots , s_{2r} , \mu^{-1} s_0 , \dots , \mu^{-1} s_{l-1} ).
\]
Using the definition of $\alpha$, we get that
\begin{eqnarray*}
t_{2l-1} & = & \mu ( 2 s_{l+r}^2 + 4 ( s_l \mu^{-1} s_{l-1} + s_{l+1} \mu^{-1} s_{l-2} + \dots + s_{2l-1} \mu^{-1} s_0 \\
& + & s_{ 2l } s_{ 2r } + \dots + s_{ l+r-1 } s_{ l+r +1 } ) ) \\
& = & 2 \mu s_{l+r}^2 + 4 \sum_{ \{ i,j \} \in U_{2l-1} } \mu_{ \{ i,j \} } s_i s_j.
\end{eqnarray*}
Assume now that $0 \leq l \leq r -1$.  By Claim 2,
\[
t_{2l} = \mu^2 \alpha ( s_{l+r + 1} , s_{l+r + 2} , \dots , s_{2r} , \mu^{-1} s_0 , \dots , \mu^{-1} s_{l+r} ).
\]
We can now proceed as before using the definition of $\alpha$.
\end{proof}

\medskip

\noindent
\textbf{Claim 4}: Let $s,x \in \mathbb{F}_{q^t}$.  If $0 \leq l \leq 2r$, then the coefficient of
$\theta^l$ in $(s+x)^2$ is
\[
( s_{l/2} + x_{l/2} )^2 + 2 \sum_{ \{i,j \} \in U_l } \mu_{ \{i,j \} } (s_i + x_i )(s_j + x_j) ~\mbox{if $l$ is even},
\]
and
\[
\mu ( s_{ r + l/2 + 1/2} + x_{r + l/2 + 1/2} )^2 + 2 \sum_{ \{i,j \} \in U_l } \mu_{ \{i,j \} } (s_i + x_i )(s_j + x_j )
~ \mbox{if $l$ is odd}.
\]

\smallskip

\begin{proof}[Proof of Claim 4]
Consider
\[
(s + x)^2 = \sum_{i=0}^{2r} (s_i + x_i)^2 \theta^{2i} + 2 \sum_{0 \leq i < j \leq 2r} (s_i + x_i )(s_j + x_j) \theta^{i+j}.
\]
The claim follows from the definitions of $\mu_{ \{i,j \} }$, $U_l$, and the identity
$\theta^{2r + k } = \mu \theta^{k-1}$ for $1 \leq k \leq 2r$.
\end{proof}

\medskip

\noindent
\textbf{Claim 5}: If $(s,t) , (x,y) \in Z$ and $(s + x)^2 = t + y$, then
\[
\mu ( s_{l + r} - x_{l+r} )^2 + 2 \sum_{ \{i , j \} \in U_{2l-1} } \mu_{ \{i,j \} } (s_i - x_i  )(s_j  - x_j)= 0
 ~ \mbox{for $1 \leq l \leq r$},
\]
and
\[
(s_l - x_l)^2 + 2 \sum_{ \{i, j \} \in U_{2l} } \mu_{ \{ i , j  \} } (s_i - x_i )(s_j - x_j ) = 0
~ \mbox{for $0 \leq l \leq r $}.
\]

\smallskip
\begin{proof}[Proof of Claim 5]
By Claim 4, equating coefficients of $1, \theta , \dots , \theta^{2r}$ in the equation
$(s + x)^2 = t + y$ gives
\[
t_{2l-1}  + y_{2l-1} = \mu ( s_{l+r} + x_{l+r} )^2 + 2 \sum_{ \{i,j \} \in U_{2l-1} } \mu_{ \{i,j \} } (s_i + x_i )(s_j + x_j)
 ~ \mbox{if $1 \leq l \leq r$},
\]
and
\[
t_{2l}  + y_{2l} = (s_l + x_l)^2 + 2 \sum_{ \{ i , j \} \in U_{2l} } \mu_{ \{i,j \} } (s_i + x_i)(s_j + x_j)
~ \mbox{if $0 \leq l \leq r$}.
\]
Now we apply Claim 3 to $t_{2l-1}$ and $y_{2l-1}$.  This gives
\begin{eqnarray*}
2 \mu ( s_{l+r}^2 +  x_{l+r}^2) & + & 4 \sum_{ \{i,j \} \in U_{2l-1} } \mu_{ \{i,j \} } (s_i s_j + x_i x_j)   =  \\
\mu ( s_{l+r}  +  x_{l+r} )^2 &+& 2 \sum_{ \{i,j \} \in U_{2l-1} } \mu_{ \{i,j \} } (s_i + x_i )(s_j + x_j)
\end{eqnarray*}
for $1 \leq l \leq r$.  This can be rewritten as
\[
\mu ( s_{l + r} - x_{l+r} )^2 + 2 \sum_{ \{i , j \} \in U_{2l-1} } \mu_{ \{i,j \} } (s_i - x_i  )(s_j  - x_j)= 0.
\]
A similar application of Claim 3 (and Claim 1 in the case of $t_{2r}$ and $y_{2r}$) gives
\[
(s_l - x_l)^2 + 2 \sum_{ \{i, j \} \in U_{2l} } \mu_{ \{ i , j  \} } (s_i - x_i )(s_j - x_j ) = 0
\]
for $0 \leq l \leq r$.
\end{proof}

\medskip

We are now ready to find an upper bound on the maximum degree of the subgraph of $G_{q^t}$ induced by
$Z$.  Fix a vertex $(s,t) \in Z$.  Suppose $(x,y)$ is a neighbor of $(s,t)$ with $(x,y) \in Z$.  By Claim 5,
$(x_0 , \dots , x_{2r} ) \in \mathbb{F}_q^{2r+1}$ is a solution to the system
\[
\mu ( s_{l + r} - x_{l+r} )^2 + 2 \sum_{ \{i , j \} \in U_{2l-1} } \mu_{ \{i,j \} } (s_i - x_i  )(s_j  - x_j)= 0
 ~ \mbox{for $1 \leq l \leq r$},
\]
and
\[
(s_l - x_l)^2 + 2 \sum_{ \{i, j \} \in U_{2l} } \mu_{ \{ i , j  \} } (s_i - x_i )(s_j - x_j ) = 0
~ \mbox{for $0 \leq l \leq r $}.
\]
If we set $z_i = s_i - x_i$ for $0 \leq i \leq 2r$, then we see that we have a solution to the following system of
$2r+1$ homogeneous quadratic equations in the $2r+1$ unknowns $z_0 , \dots , z_{2r+1}$:
\[
\mu  z_{l+r}^2 + 2 \sum_{ \{i , j \} \in U_{2l-1} } \mu_{ \{i,j \} } z_i z_j = 0
 ~ \mbox{for $1 \leq l \leq r$},
\]
and
\[
z_l^2 + 2 \sum_{ \{i, j \} \in U_{2l} } \mu_{ \{ i , j  \} }  z_i z_j  = 0
~ \mbox{for $0 \leq l \leq r $}.
\]

\begin{lemma}\label{le:sams lemma}
The number of solutions $(z_0 , \dots , z_{2r} ) \in \fq^{2r+1}$ to the above system of
$2r+1$ homogeneous quadratic equations is at most
\[
\frac{2r+5}{3} q^{ \frac{4t}{3} +1 }.
\]
\end{lemma}
\begin{proof}
Let $m$ be the largest integer such that $m \leq \frac{ 2 ( r + 1) }{3}$.
 If there is set
$T$ of size $m$ such that each $z_i$' in the set $\{ z_i : i \in T \}$  either is zero or is determined uniquely by the
$z_j$'s in the set $\{ z_j : j \in \{0 , 1 , \dots , 2r \} \backslash T \}$, then  there are at most
$q^{2r + 1 - m }$  solutions. We will show there are at most $(m+1)$ choices for the index set $T$ which implies that we have at most $(m+1)q^{2r+1-m}$ solutions in total.

Let $A$ be the $2( r - m + 1) \times m $ matrix with entries in $\fq$,  where for
$1 \leq i \leq 2 (r - m + 1)$ and $1 \leq j \leq m$, the $(i,j)$ entry of $A$ is the coefficient of
$z_{j-1}$ in the equation
\[
\mu z_{ m + r + ( i - 1)/2 }^2 + 2 \sum_{ \{k,l \} \in U_{ 2( m + (i-1)/2) -1 } } \mu_{ \{k,l \} } z_k z_l = 0
\]
if $i$ is odd, and the coefficient of $z_{j-1}$ in the equation
\[
z_{ m + ( i - 2)/2 }^2 + 2 \sum_{ \{ k , l \} \in U_{ 2( m + (i - 2)/2) } } \mu_{ \{k,l \} } z_k z_l = 0
\]
if $i$ is even.  If $A$ is the matrix formed in this way, then one check that
 \[
 A=
 \begin{pmatrix}
 z_{2m-1} & z_{2m-2} & z_{2m - 3} & \ldots & z_{m} \\
 z_{2m} & z_{2m-1} & z_{2m - 2} & \ldots & z_{m+1} \\
 z_{2m+1} & z_{2m} & z_{2m - 1} & \ldots & z_{m+2} \\
\vdots   & \vdots & \vdots & \ddots & \vdots\\
 z_{2r} & z_{2r-1} & z_{2r - 2} & \ldots & z_{2r-m+1} \\
 \end{pmatrix}
 \]
As $m \leq \tfrac{2(r+1)}{3}$, we have $2(r-m+1) \geq m$.
 Let $x = ( z_0 , z_1 , \dots , z_{m-1} )^T$ and
\[
b_i =
\left\{
\begin{array}{ll}
- 2^{-1} \mu z_{ m + r +   (i-1)/2  }-\sum_{\underset{\{k,l\} \cap \{0,1,\ldots,m-1\}=\emptyset} {\{k,l \} \in U_{ 2( m + (i-1)/2)-1  }} } \mu_{ \{k,l \} } z_k z_l & \mbox{if $i$ is odd} \\
- 2^{-1}  z_{ m + (i-2)/2 }-\sum_{\underset{\{k,l\} \cap \{0,1,\ldots,m-1\}=\emptyset} {\{k,l \} \in U_{ 2( m + (i-2)/2) }} } \mu_{ \{k,l \} } z_k z_l & \mbox{if $i$ is even}
\end{array}
\right.
\]
for $1 \leq i \leq 2(r - m + 1)$.  Let $b = ( b_1 , b_2 , \dots , b_{2 ( r- m + 1) } )^T$.

We next show an upper bound $m+1$ for the possible choices for the index set $T$.
 Let $r_i$ be the $i$-th row of $A$ so that
\[
r_i = ( z_{ 2m - 1 + (i - 1) } , z_{2m - 2 + ( i - 1) } , z_{2m - 3 + (i-1) } , \dots , z_{m + ( i - 1) } ).
\]
If $r_1={\bf 0}$, then we take $T=\{m,m+1,\ldots,2m-1\}$. We assume $r_1 \not = {\bf 0}$ for the rest of the proof of the lemma.  

\smallskip

\noindent
\textbf{Claim}: If $i \geq 1$ and $r_{i+1} \in \textup{Span}_{ \fq} \{ r_1 , \dots , r_i \}$, then
\[
z_{m + i + j } \in \textup{Span}_{ \fq} \{ z_m , z_{m+1} , \dots , z_{m + i  - 1} \}~ \mbox{for}~ j=0,1, \dots , m - 1.
\]

\smallskip

\begin{proof}[Proof of Claim]
We prove the claim by induction on $j$.  Suppose
\begin{equation}\label{eq:10.1}
r_{i+1} = \alpha_1 r_1 + \dots + \alpha_i r_i
\end{equation}
for some $\alpha_j \in \fq$.  By considering the last coordinate, we get that
\[
z_{m + i } = \sum_{j=1}^{i} \alpha_j z_{ m + ( j - 1) } \in \textup{Span}_{ \fq} \{ z_m , \dots , z_{m + i - 1} \}
\]
establishing the base case $j=0$.  If $z_{m + i + j_0 } \in \textup{Span}_{ \fq} \{ z_m , \dots , z_{m + i - 1 } \}$ for
$0 \leq j_0 \leq m -2$, then by (\ref{eq:10.1}),
\begin{equation*}
z_{m + i + j_0 + 1}  =  \sum_{j=1}^{i} \alpha_j z_{ m + j_0 + 1 + (j - 1) }
 =  \alpha_i z_{m + i + j_0  } + \sum_{j=1}^{i-1} \alpha_j z_{m +  j + j_0 }.
\end{equation*}
By the inductive hypothesis, this is in $\textup{Span}_{ \fq} \{ z_m , \dots , z_{m + i - 1} \}$.
\end{proof}

\smallskip

By the Claim, if there is an $i \in \{1 , 2, \dots , m - 1 \}$  such that
$r_{i+1} \in \textup{Span}_{ \fq} \{ r_1 , \dots , r_i \}$, then there exist $m$~$z_i$'s that are uniquely determined by the
other $z_j$'s and we can take $T=\{z_{m+i},z_{m+i+1},\ldots,z_{2m+i-1}\}$.
  Otherwise, $r_1 , \dots , r_m$ are linearly independent which implies that the rank of $A$ is at least $m$.  It is at this step where we need $m \leq \frac{2 (  r+ 1) }{3}$ as we
require the number of rows of $A$, which is $2( r - m + 1)$, to be at least $m$.
Since the rank of $A$ is at least $m$, there is at most one solution $x$ to $Ax = b$.  In this case we take $T=\{0,1,\ldots,m-1\}$
and each of  $\{z_0,z_1,\ldots,z_{m-1}\}$ is determined by $\{z_m,z_{m+1},\ldots,z_{2r}\}$.

Altogether, we have at most
\[
(m+1) q^{ \frac{4r}{3} - \frac{1}{3} } \leq \frac{2r+5}{3} q^{ \frac{4r}{3} +1 }
\]
solutions which proves the lemma.

\end{proof}

\smallskip

By Lemma \ref{le:sams lemma}, the maximum degree of $G_{q^t} [Z]$ is at most
$\tfrac{2r+5}{3} q^{ \tfrac{4r}{3} +1 }$.  Therefore,
\[
\chi ( G_{q^t} ) \leq  \frac{2r+5}{3} q^{ \frac{4r}{3} +1 } + (2r + 1)q^{r + 1}
\]
\end{proof}

\smallskip

We obtain a coloring of $ER_{q^{t}}$ from a coloring of $G_{q^t}$ as before.  We use one new color on the vertices
$z_1, \dots , z_t$, and then give $y$ any color that is used on $G_{q^t}$.  This gives a coloring of $ER_{q^t}$ that uses at most
$ \frac{2r+5}{3} q^{ \frac{4r}{3} +1 }+(2r+1) q^{r+1} + 1$ colors which proves
Theorem \ref{th:chromatic number 2}.


\section{Proof of Theorem \ref{th:main theorem}}

The following lemma is easily proved using the definition of adjacency in $G_q$.

\begin{lemma}
Suppose $\alpha_i , \alpha_j , \alpha_k$ are distinct elements of $\mathbb{F}_q$ such that
\[
\alpha_i + \alpha_j = a^2, \alpha_j + \alpha_k = b^2, ~\mbox{and}~ \alpha_k + \alpha_i = c^2
\]
for some $a,b,c \in \mathbb{F}_q$.  If $x + y = a$, $y+ z = b$, and $z + x = c$, then
$\{ ( x , \alpha_i ) , (y , \alpha_j) , (z , \alpha_k)\}$ induces a triangle in $G_q$.
\end{lemma}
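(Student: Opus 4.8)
The plan is to read off all three required adjacencies directly from the definition of $G_q$. Recall (as recorded at the start of Section 3) that two distinct vertices $(x_1,x_2)$ and $(y_1,y_2)$ of $G_q$ are adjacent precisely when $(x_1+y_1)^2 = x_2+y_2$. So to show that $\{(x,\alpha_i),(y,\alpha_j),(z,\alpha_k)\}$ induces a triangle, I only need to check that these are three distinct vertices and that each of the three pairs satisfies this relation.

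First I would note that the three vertices are pairwise distinct: their second coordinates are $\alpha_i,\alpha_j,\alpha_k$, which are distinct by hypothesis, so the points are distinct elements of $\fq\times\fq=V(G_q)$. Next I would verify the three pairs one at a time. For $(x,\alpha_i)$ and $(y,\alpha_j)$ the adjacency relation asks whether $(x+y)^2=\alpha_i+\alpha_j$; since $x+y=a$ and $\alpha_i+\alpha_j=a^2$, this reads $a^2=a^2$, so the pair is an edge. The pair $(y,\alpha_j),(z,\alpha_k)$ is handled identically using $y+z=b$ and $\alpha_j+\alpha_k=b^2$, and the pair $(z,\alpha_k),(x,\alpha_i)$ using $z+x=c$ and $\alpha_k+\alpha_i=c^2$. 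As all three pairs among three distinct vertices are edges, the induced subgraph on $\{(x,\alpha_i),(y,\alpha_j),(z,\alpha_k)\}$ is a triangle, completing the proof.

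There is essentially no obstacle in this lemma: once the adjacency rule is written in the form $(x_1+y_1)^2=x_2+y_2$, the statement is a one-line substitution, and the only thing that genuinely needs the hypothesis is the distinctness of the vertices, which is guaranteed by the assumption that $\alpha_i,\alpha_j,\alpha_k$ are distinct. For later use it is worth remembering that, given $a,b,c$, the system $x+y=a$, $y+z=b$, $z+x=c$ always has a unique solution in $\fq$ because $q$ is odd, so in practice the lemma will be applied by first choosing a triple $\alpha_i,\alpha_j,\alpha_k$ whose pairwise sums are all squares and then solving for $x,y,z$; the real work of Theorem \ref{th:main theorem} — assembling such triples into a subgraph on at most $36$ vertices with chromatic number at least $4$ — comes afterward.
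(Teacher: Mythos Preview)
Your proof is correct and matches the paper's approach exactly: the paper does not even write out a proof, merely noting that the lemma ``is easily proved using the definition of adjacency in $G_q$,'' and your argument is precisely that direct verification. Your additional observation that the distinctness of the $\alpha$'s guarantees the distinctness of the three vertices is a useful point that the paper leaves implicit.
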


Given an odd prime power $q$, let $\chi : \mathbb{F}_q \rightarrow \{ 0 , \pm 1 \}$ be the quadratic character on $\mathbb{F}_q$.  That is, $\chi (0 ) = 0$, $\chi (a ) =1$ if $a$ is a nonzero square in $\mathbb{F}_q$, and $\chi (a) = -1$ otherwise.
For the next lemma, we require some results on finite fields (see Chapter 5 of \cite{LN}).

\begin{proposition}\label{char} Let $q$ be an odd prime and $f(x) = a_2 x^2 + a_1 x + a_0 \in \mathbb{F}_q [x]$ where $a_2 \neq 0$.
If $a_1^2 - 4a_0 a_2 \neq 0$, then
\[
\sum_{c \in \mathbb{F}_q} \chi (f(c)) = - \chi(a_2).
\]
\end{proposition}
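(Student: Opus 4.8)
The plan is to complete the square so as to reduce the sum to the model case $\sum_{y}\chi(y^2+D)$ with $D\neq 0$, and then evaluate that sum directly using the multiplicativity of $\chi$ together with $\sum_{v\in\fq}\chi(v)=0$.

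First I would write $f(x)=a_2\bigl(x+\tfrac{a_1}{2a_2}\bigr)^2+\bigl(a_0-\tfrac{a_1^2}{4a_2}\bigr)$, which is legitimate since $q$ is odd and $a_2\neq 0$. Because $\chi$ is completely multiplicative, $\chi(f(c))=\chi(a_2)\,\chi\bigl((c+\tfrac{a_1}{2a_2})^2+D\bigr)$ with $D=\tfrac{4a_0a_2-a_1^2}{4a_2^2}$, and the hypothesis $a_1^2-4a_0a_2\neq 0$ ensures $D\neq 0$. Translating the summation index then gives $\sum_{c\in\fq}\chi(f(c))=\chi(a_2)\sum_{y\in\fq}\chi(y^2+D)$.

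Next I would evaluate $T:=\sum_{y\in\fq}\chi(y^2+D)$. Since for every $u\in\fq$ the number of $y$ with $y^2=u$ equals $1+\chi(u)$, we get $T=\sum_{u\in\fq}(1+\chi(u))\chi(u+D)=\sum_{u\in\fq}\chi(u+D)+\sum_{u\in\fq}\chi(u)\chi(u+D)$. The first sum is $\sum_{v\in\fq}\chi(v)=0$. In the second sum the term $u=0$ vanishes, and for $u\neq 0$ we have $\chi(u)\chi(u+D)=\chi\bigl(u^2(1+D/u)\bigr)=\chi(1+D/u)$; as $u$ runs over $\fq^*$ the quantity $1+D/u$ runs over $\fq\setminus\{1\}$ (this is where $D\neq 0$ is used), so the second sum equals $\sum_{w\neq 1}\chi(w)=-\chi(1)=-1$. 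Hence $T=-1$, and therefore $\sum_{c\in\fq}\chi(f(c))=-\chi(a_2)$.

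This is a classical identity (it is Theorem 5.48 in \cite{LN}), so there is no real obstacle; the only points requiring care are that $q$ odd makes $2$ invertible throughout the manipulations above, and that $D\neq 0$ is exactly what makes the substitution $w=1+D/u$ a bijection of $\fq^*$ onto $\fq\setminus\{1\}$.
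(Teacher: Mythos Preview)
Your proof is correct and is exactly the standard argument (indeed it is the proof of Theorem~5.48 in \cite{LN}). The paper itself does not prove this proposition; it simply states it as a known result, citing Chapter~5 of \cite{LN}, so there is nothing further to compare.
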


\begin{proposition}[Weil]\label{weil}
If $f (x) \in \mathbb{F}_q [x] $ is a degree $d \geq 1$ polynomial that is not the square of another polynomial, then
\[
\left| \sum_{x \in \mathbb{F}_q } \chi (f (x) ) \right| \leq (d - 1 ) \sqrt{q}.
\]
\end{proposition}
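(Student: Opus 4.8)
The cleanest route to Weil's inequality is to translate the character sum into a point count on a curve, and the plan is to do exactly that. First I would set $C$ to be the affine plane curve $y^2 = f(x)$ over $\mathbb{F}_q$ and observe that for each fixed $x$ the number of $y \in \mathbb{F}_q$ with $y^2 = f(x)$ equals $1 + \chi(f(x))$ — this is correct even when $f(x) = 0$, since then $\chi(f(x)) = 0$ and $y = 0$ is the unique solution — so that
\[
\#C(\mathbb{F}_q) = q + \sum_{x \in \mathbb{F}_q} \chi(f(x)).
\]
Hence the claim becomes $|\#C(\mathbb{F}_q) - q| \leq (d-1)\sqrt{q}$. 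Next I would clear square factors: writing $f = c\,g^2 h$ with $c \in \mathbb{F}_q^*$ and $h$ squarefree, one has $\chi(f(x)) = \chi(c)\,\chi(h(x))$ whenever $g(x) \neq 0$, so $\sum_x \chi(f(x))$ and $\chi(c)\sum_x \chi(h(x))$ differ by at most $2\deg g$; since $f$ is not the square of a polynomial — here one really wants the standard slight strengthening, that $f$ is not of the form $c\,g^2$ — the squarefree polynomial $h$ has degree between $1$ and $d$, and a short computation shows it suffices to prove the bound for squarefree $f$ of positive degree.

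For squarefree $f$ of degree $d$, the smooth projective model $\overline{C}$ of $y^2 = f(x)$ is a curve of genus $g = \lfloor (d-1)/2 \rfloor$ that agrees with the affine model $C$ except over $x = \infty$, where it has at most two points. The essential ingredient is then the Hasse--Weil bound
\[
\bigl| \#\overline{C}(\mathbb{F}_q) - (q+1) \bigr| \leq 2g\,\sqrt{q},
\]
which, together with $2g \leq d-1$ and a one-line accounting of the points over $x = \infty$, gives $\bigl| \sum_x \chi(f(x)) \bigr| \leq (d-1)\sqrt{q}$.

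If one wants a self-contained argument rather than invoking the Riemann hypothesis for curves, I would instead run Stepanov's polynomial method in the streamlined form due to Bombieri: construct a nonzero element of the coordinate ring $\mathbb{F}_q[x,y]/(y^2 - f(x))$ of carefully controlled degree that vanishes to a large prescribed order at every $\mathbb{F}_q$-rational point of $C$, and play the total order of vanishing off against the degree to force an inequality of the shape $\#C(\mathbb{F}_{q^n}) \leq q^n + c\,d\,q^{n/2}$; applying this to $C$ and to its quadratic twist $y^2 = \mu f(x)$ for a nonsquare $\mu$ and combining the two (or, equivalently, passing to $\mathbb{F}_{q^2}$ and using the trivial bound there) upgrades this one-sided estimate to the two-sided bound. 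Either way, the main obstacle is precisely this geometric core: the reductions in the first two paragraphs are purely formal, but counting points on a curve of positive genus is not elementary, and even Bombieri's argument demands a delicate choice of the auxiliary function together with an exact count of its zeros with multiplicity. In a paper of the present kind the bound is simply imported from Chapter~5 of \cite{LN}, so in practice the ``proof'' is the translation to $\#C(\mathbb{F}_q)$ plus that citation, with the genus estimate $g \leq (d-1)/2$ supplying the coefficient $d-1$.
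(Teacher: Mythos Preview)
The paper gives no proof whatsoever of this proposition: it is simply stated, together with Proposition~\ref{char}, under the umbrella remark ``we require some results on finite fields (see Chapter~5 of \cite{LN}).'' So your final paragraph hits the nail on the head --- in this paper the ``proof'' is literally the citation to Lidl--Niederreiter, with no translation to point counts or anything else.

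Your sketch goes far beyond what the paper does, and the outline is the standard and correct one: the identity $\#\{y:y^2=f(x)\}=1+\chi(f(x))$ turns the sum into $\#C(\mathbb{F}_q)-q$, the reduction to squarefree $f$ is routine, and then either Hasse--Weil (with $g=\lfloor (d-1)/2\rfloor$) or the Stepanov--Bombieri argument finishes the job. You also correctly flag the one subtlety in the \emph{statement} as printed: ``not the square of another polynomial'' should really read ``not of the form $c\,g^2$,'' since e.g.\ $f(x)=cx^2$ with $c$ a nonsquare gives $\sum_x\chi(f(x))=-(q-1)$, violating the bound. The paper's applications only ever invoke the proposition for products of distinct linear factors, so this sloppiness is harmless there, but your caution is well placed.
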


\begin{lemma}\label{choice}
If $q > 487$ is a power of an odd prime, then there are elements $\alpha_1 , \dots , \alpha_5 \in \mathbb{F}_q^*$ such that
$ \alpha_1 , \dots ,  \alpha_5$ are all distinct, and
\[
\chi ( \alpha_i + \alpha_j ) = 1
\]
for $1 \leq i < j \leq 5$.
\end{lemma}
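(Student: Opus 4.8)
The plan is to build the five elements one at a time, and at each stage use the quadratic character $\chi$ together with Weil's bound (Proposition~\ref{weil}) to guarantee that a legitimate new element exists; the binding step turns out to be passing from four chosen elements to five, and it is exactly that step which forces $q>487$.

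So suppose distinct nonzero $\alpha_1,\dots,\alpha_m$ with $\chi(\alpha_i+\alpha_j)=1$ for all $1\le i<j\le m$ have already been selected (this is vacuous for $m=0$). The number of $\beta\in\mathbb{F}_q$ with $\chi(\beta+\alpha_i)=1$ for every $i\le m$ equals
\[
\sum_{\beta\in\mathbb{F}_q}\ \prod_{i=1}^{m}\frac{1+\chi(\beta+\alpha_i)}{2}\ -\ \Theta ,
\]
where $\Theta\ge 0$ is the contribution of the values $\beta\in\{-\alpha_1,\dots,-\alpha_m\}$, at which one factor is $\tfrac12$ rather than $0$; since the $-\alpha_i$ are distinct and the other factors are at most $1$, we have $\Theta\le m/2$. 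Discarding in addition the at most $m+1$ values $\beta\in\{0,\alpha_1,\dots,\alpha_m\}$ (which we need $\alpha_{m+1}$ to avoid) shows that the number of admissible choices of $\alpha_{m+1}$ is at least
\[
\sum_{\beta\in\mathbb{F}_q}\prod_{i=1}^{m}\frac{1+\chi(\beta+\alpha_i)}{2}\ -\ \frac{m}{2}\ -\ (m+1).
\]

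Now expand $\prod_{i=1}^{m}\bigl(1+\chi(\beta+\alpha_i)\bigr)=\sum_{S\subseteq\{1,\dots,m\}}\chi\bigl(\prod_{i\in S}(\beta+\alpha_i)\bigr)$ and sum over $\beta$. The empty set gives $q$; each singleton gives $\sum_{\beta\in\mathbb{F}_q}\chi(\beta+\alpha_i)=0$; and for $|S|\ge 2$ the polynomial $\prod_{i\in S}(X+\alpha_i)$ has the $|S|$ distinct roots $-\alpha_i$ — this is precisely where distinctness of the previously chosen $\alpha_i$ enters — so it is squarefree, in particular not a square of a polynomial, and Proposition~\ref{weil} yields $\bigl|\sum_\beta\chi(\prod_{i\in S}(\beta+\alpha_i))\bigr|\le(|S|-1)\sqrt q$. (For $|S|=2$ one could instead use the exact evaluation in Proposition~\ref{char}, since $(X+\alpha_i)(X+\alpha_j)$ has discriminant $(\alpha_i-\alpha_j)^2\ne 0$; this, and the substitution $\beta+\alpha_1=y^2$, sharpen the constants but are not needed.) Hence the number of admissible $\alpha_{m+1}$ is at least
\[
\frac{1}{2^{m}}\Bigl(q-\sqrt q\sum_{\substack{S\subseteq\{1,\dots,m\}\\ |S|\ge 2}}(|S|-1)\Bigr)-\frac{m}{2}-(m+1).
\]
For $m=0,1,2,3$ this is positive already for small $q$, so a configuration $\alpha_1,\dots,\alpha_4$ exists whenever $q$ is modestly large. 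For $m=4$ one has $\sum_{|S|\ge 2}(|S|-1)=6\cdot 1+4\cdot 2+1\cdot 3=17$, so the count of admissible $\alpha_5$ is at least $\tfrac1{16}\bigl(q-17\sqrt q\bigr)-7=\tfrac1{16}\bigl(q-17\sqrt q-112\bigr)$. The real function $q\mapsto q-17\sqrt q-112$ is positive for $q>\bigl(\tfrac{17+\sqrt{737}}{2}\bigr)^2=487.2\ldots$, so for every prime power $q>487$ the count is a nonnegative integer that is bounded below by a positive real number, hence is at least $1$. This produces $\alpha_5$, and by construction $\alpha_1,\dots,\alpha_5$ are distinct, nonzero, and satisfy $\chi(\alpha_i+\alpha_j)=1$ for all $i<j$.

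The only real obstacle is keeping the threshold as small as $487$: this needs (i) verifying that every polynomial $\prod_{i\in S}(X+\alpha_i)$ arising in the expansion is genuinely non-square, so that Weil's bound is available with exponent $\tfrac12$ — which is why distinctness of $\alpha_1,\dots,\alpha_m$ must be carried through the induction — and (ii) a careful, rather than wasteful, accounting of the few forbidden values of $\alpha_{m+1}$ (namely $0$, the earlier $\alpha_i$, and their negatives, the last of which carry weight only $\tfrac12$). An alternative would be to estimate directly the number of ordered $5$-tuples of distinct nonzero elements with all $\binom{5}{2}$ pairwise sums nonzero squares, by expanding over all subsets of the edge set; that also works, but one must then separately control the contribution of degenerate tuples, so the greedy argument above is cleaner.
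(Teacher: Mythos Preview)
Your proof is correct and follows essentially the same greedy construction as the paper: choose the $\alpha_i$ one at a time, expand $\prod_i(1+\chi(\beta+\alpha_i))$ over subsets, and invoke Weil's bound (Proposition~\ref{weil}) on the resulting character sums, with the $m=4$ step giving the threshold $q>487$. The only cosmetic difference is that the paper applies the exact evaluation of Proposition~\ref{char} for the pairs $|S|=2$ and a slightly looser Weil estimate for $|S|\ge 3$, whereas you use the sharp $(|S|-1)\sqrt{q}$ Weil bound uniformly for all $|S|\ge 2$; both bookkeepings land on the same numerical cutoff.
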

\begin{proof}
Choose $\alpha_1 \in \mathbb{F}_q^*$ arbitrarily.
There are $\frac{q-1}{2}$ nonzero squares in $\mathbb{F}_q^*$ so we can easily
find an $\alpha_2 \in \mathbb{F}_q \backslash \{ 0 ,  \alpha_1 \}$ such that
$\chi ( \alpha_1 + \alpha_2 ) = 1$.  Observe that this implies $\alpha_2 \neq - \alpha_1$ otherwise
$\chi ( \alpha_1 + \alpha_2 ) = 0$.  Assume that we have chosen
$\alpha_1 , \dots , \alpha_k \in \mathbb{F}_q^*$ so that
$  \alpha_1 , \dots ,  \alpha_k$ are all distinct and
\[
\chi( \alpha_i + \alpha_j ) = 1
\]
for $1 \leq i < j \leq k$.  Let
\[
f(x) = \prod_{i = 1}^{k} ( 1 + \chi ( \alpha_i + x) )
\]
and $X = \{ \beta \in \mathbb{F}_q : f( \beta ) = 2^k \}$.  If $\beta \in \mathbb{F}_q$ and $f( \beta ) > 0$, then
$\chi( \alpha_ i + \beta ) \in \{0 , 1 \}$ for $1 \leq i \leq k$.  We have $\chi ( \alpha_i  + \beta) = 0$ if and only if
$\beta =  - \alpha_i$.  Therefore, there are at most $k$ distinct $\beta$'s in
$\mathbb{F}_q$ such that $0 < f ( \beta ) < 2^k$ which implies
\begin{equation*}
2^k |X| + k 2^{k-1}  \geq  \sum_{x \in \mathbb{F}_q} f(x) =
q + \sum_{\emptyset \neq S \subset [k] }\sum_{x \in \mathbb{F}_q} \chi \left( \prod_{ \alpha \in S}( x + \alpha ) \right) .
\end{equation*}
For any $i \in [k]$, $\sum_{x \in \mathbb{F}_q} \chi( \alpha_i + x) = 0$.  For any $1 \leq i < j \leq k$,
\[
\sum_{x \in \mathbb{F}_q} \chi( x^2 + ( \alpha_i + \alpha_j ) x + \alpha_i \alpha_j ) = -1
\]
by Proposition \ref{char}.  When $k = 2$, if $\frac{q - 3  - 3 \cdot 2^2 }{2^3} \geq 3 + 1$, then
$|X| \geq 4$ and we can choose an $\alpha_3 \in \mathbb{F}_q \backslash \{ 0 , \alpha_1 , \alpha_2 \}$ that has the desired properties.  When $k \in \{3,4 \}$, we use Weyl's inequality to obtain a lower bound on $|X|$.  Observe that
since $\alpha_1 , \dots , \alpha_k$ are distinct, no product $(x + \alpha_{i_1} ) \cdots (x + \alpha_{i_l} )$
for $1 \leq i_1 < \dots < i_l \leq k$ is the square of a polynomial in $\fq [x]$.  By Weyl's inequality, for any
$1 \leq i_1 < \dots < i_l \leq k$,
\[
\left| \sum_{x \in \mathbb{F}_q} \chi( (x + \alpha_{i_1} ) \cdots (x + \alpha_{i_l} ) ) \right| \leq l \sqrt{q}.
\]
The hypothesis on $q$ implies that $q$ is large enough so that $|X| \geq 5$ when $k=3$, and $|X| \geq 6$ when $k = 4$.   Therefore, we can inductively choose $\alpha_4$ and $\alpha_5$ so that
$\alpha_1 , \dots , \alpha_5$ satisfy all of the required properties.
\end{proof}

\smallskip

Choose elements $\alpha_1 , \dots , \alpha_5 \in \mathbb{F}_q^*$ satisfying the properties of Lemma \ref{choice}.
Let
\[
\alpha_i  + \alpha_j = a_{i,j}^2
\]
for $1 \leq i < j \leq 5$.  Since $\alpha_i \neq - \alpha_j$, no $a_{i,j}$'s is zero.  For $1 \leq i < j < k \leq 5$,
let $x_{i,j,k}$, $y_{i,j,k}$, and $z_{i,j,k}$ be any elements of $\mathbb{F}_q$ that satisfy
\begin{center}
$x_{i,j,k} + y_{i,j,k} = a_{i,j}$, ~~ $y_{i,j,k} + z_{i,j,k} = a_{j,k}$, ~ and~ $z_{i,j,k} + x_{i,j,k} = a_{i,k}$.
\end{center}
Then the vertices
$( x_{i,j,k} , \alpha_i )$, $(y_{i,j,k} , \alpha_j )$, and $(z_{i,j,k} , \alpha_k )$ form a triangle in $G_q$
and this holds for any $1 \leq i < j < k \leq 5$.

Now we use these triangles, which are in $G_q$, together with the new vertices $z_1 , \dots , z_q , y$ that are added to $G_q$ to from $H_q \cong ER_q$ to obtain a subgraph with chromatic number at least four.
For $1 \leq i \leq 5$, the vertex $z_{ \alpha_i}$ is adjacent to all vertices of the form
$(x , \alpha_i )$.  The vertex $y$ is adjacent to each $z_i$.  Consider the subgraph $H_q$ whose vertices are
$y , z_{ \alpha _1} , \dots , z_{ \alpha_5}$ together with all $(x_{i,j,k} , \alpha_i )$, $(y_{i,j,k} , \alpha_j )$, and
$(z_{i,j,k} , \alpha_k)$ for $1 \leq i < j < k \leq 5$.  Suppose we have a proper 3-coloring of this subgraph, say with colors 1, 2, and 3.  If the color 1 is given to three distinct vertices $z_{ \alpha_i}$'s, say
$z_{ \alpha_i } , z_{ \alpha_j}$, and $z_{ \alpha_k}$, then only colors 2 and 3 may be used on the triangle
whose vertices are $( x_{i,j,k} , \alpha_i )$, $(y_{i,j,k} , \alpha_j )$, and $(z_{i,j,k} , \alpha_k )$.  Therefore, all three colors must be used to color the vertices in the set $\{ z_{ \alpha_1 } , \dots , z_{ \alpha_5} \}$ but then no color may be used on $y$.
The number of vertices in this subgraph is at most $1 + 5 + \binom{5}{3}3 =36$.


\section{Concluding Remarks}

The upper bounds of Theorems \ref{th:gq coloring} and \ref{th:chromatic number 2} can be improved for large $q$ by applying the result of Alon et.\ al.\ \cite{aks} to the graph $G_{q^t} [Z]$ instead of using Brooks' Theorem.  We have chosen to use Brooks' Theorem as then there is no issue of how large the implicit constant is, and because we believe that the upper bound
should be closer to $O( q^{t/2} )$.

Using a similar argument as the one used to prove Theorem \ref{th:chromatic number 2}, we can prove the following.

\begin{theorem}
If $q$ is a power of an odd prime, then
\[
\chi ( G_{q^3} ) \leq 6 q^2.
\]
\end{theorem}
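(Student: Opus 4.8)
The plan is to run the argument of Theorem~\ref{th:gq coloring} in the special case $r=1$, $t=3$, while observing that the hypothesis that some $x^{3}-\mu$ be irreducible in $\fq[x]$ plays no role there. Since $[\mathbb{F}_{q^{3}}:\fq]=3$ is prime, every $\theta\in\mathbb{F}_{q^{3}}\setminus\fq$ has degree $3$ over $\fq$, so $\{1,\theta,\theta^{2}\}$ is an $\fq$-basis of $\mathbb{F}_{q^{3}}$, and no relation of the form $\theta^{3}\in\fq$ is ever needed. Fix such a $\theta$, write $s=s_{0}+s_{1}\theta+s_{2}\theta^{2}$ for the unique expansion of $s\in\mathbb{F}_{q^{3}}$ with $s_{i}\in\fq$, and for $j=0,1,2$ let $Q_{j}:\mathbb{F}_{q^{3}}\to\fq$ send $s$ to the coefficient of $\theta^{j}$ in $s^{2}$; each $Q_{j}$ is a quadratic form in $s_{0},s_{1},s_{2}$. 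Writing $\theta^{3}=c_{0}+c_{1}\theta+c_{2}\theta^{2}$ and expanding, one finds $Q_{2}(s)=s_{1}^{2}+2s_{0}s_{2}+2c_{2}s_{1}s_{2}+(c_{1}+c_{2}^{2})s_{2}^{2}$; the symmetric matrix of this form has determinant $-1$, so $Q_{2}$ is a nondegenerate ternary quadratic form over $\fq$ (here $q$ odd).

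First I would cover most of $V(G_{q^{3}})$ by a bounded union of bipartite pieces, exactly as in the proof of Theorem~\ref{th:gq coloring}. By Lemma~\ref{le:6.1 lemma}(ii) with $t=3$, the sets $I^{\pm}=\{(x_{0},\,y_{0}+y_{1}\theta+y_{2}\theta^{2}):x_{0},y_{0},y_{1}\in\fq,\ y_{2}\in\fq^{\pm}\}$ are independent in $G_{q^{3}}$, so $\chi(G_{q^{3}}[J])\le 2$ for $J=I^{+}\cup I^{-}$. For each $k$ in the $q^{2}$-element subspace $\textup{Span}_{\fq}\{\theta,\theta^{2}\}$, the translation $\psi_{k}$ of Lemma~\ref{le:6.2 lemma}, taken over $\mathbb{F}_{q^{3}}$, is an automorphism of $G_{q^{3}}$, so with $S=\bigcup_{k}\psi_{k}(J)$ we get $\chi(G_{q^{3}}[S])\le 2q^{2}$. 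Since $4ks_{0}+2k^{2}=2s^{2}-2s_{0}^{2}$ whenever $s=s_{0}+k$ with $s_{0}\in\fq$ and $k\in\textup{Span}_{\fq}\{\theta,\theta^{2}\}$, and since the coefficient of $\theta^{2}$ in $2s^{2}-2s_{0}^{2}$ equals $2Q_{2}(s)$, running through the vertices of $J$ and their $\psi_{k}$-translates shows, just as in Claim~1 of the proof of Theorem~\ref{th:gq coloring}, that
\[
X:=V(G_{q^{3}})\setminus S=\{(s,t)\in\mathbb{F}_{q^{3}}\times\mathbb{F}_{q^{3}}: t_{2}=2Q_{2}(s)\}.
\]

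Next I would color $X$ by bounding its maximum degree and applying Brooks' Theorem. If $(s,t),(x,y)\in X$ are adjacent then $(s+x)^{2}=t+y$; comparing coefficients of $\theta^{2}$ and using $t_{2}=2Q_{2}(s)$ and $y_{2}=2Q_{2}(x)$ gives $Q_{2}(s+x)=2Q_{2}(s)+2Q_{2}(x)$, which by the parallelogram identity $Q_{2}(s+x)+Q_{2}(s-x)=2Q_{2}(s)+2Q_{2}(x)$ is equivalent to $Q_{2}(s-x)=0$. Conversely, for every $z\in\mathbb{F}_{q^{3}}$ with $Q_{2}(z)=0$ the vertex $(s-z,\,(2s-z)^{2}-t)$ lies in $X$ and is adjacent to $(s,t)$. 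Hence the neighbors of $(s,t)$ inside $X$ inject into $\{z\in\mathbb{F}_{q^{3}}:Q_{2}(z)=0\}$, which has exactly $q^{2}$ elements because $Q_{2}$ is a nondegenerate ternary quadratic form over $\fq$ with $q$ odd. Therefore $\Delta(G_{q^{3}}[X])\le q^{2}$, so Brooks' Theorem gives $\chi(G_{q^{3}}[X])\le q^{2}+1$ (a complete or odd-cycle component of $G_{q^{3}}[X]$ has at most $q^{2}+1$ vertices, so the exceptional cases are harmless). Since $S$ and $X$ partition $V(G_{q^{3}})$, coloring them from disjoint palettes gives
\[
\chi(G_{q^{3}})\le 2q^{2}+(q^{2}+1)\le 6q^{2},
\]
which proves the theorem; one extra color for the independent set $\{z_{1},\dots,z_{q^{3}}\}$ of $H_{q^{3}}\cong ER_{q^{3}}$ then recovers $\chi(ER_{q^{3}})\le 6q^{2}+1$.

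The only delicate step is the degree bound: one must check carefully that the adjacency relation on $X$ really collapses to the single homogeneous equation $Q_{2}(z)=0$ --- this works precisely because $X$ is cut out by $t_{2}=2Q_{2}(s)$, with the coefficient $2$, so polarizing $Q_{2}$ leaves no extraneous terms --- and that a nondegenerate ternary quadratic form over $\fq$ has exactly $q^{2}$ zeros. Everything else is routine bookkeeping in the fixed basis $\{1,\theta,\theta^{2}\}$. Alternatively, one may imitate the proof of Theorem~\ref{th:gq coloring} verbatim, using the dilations $\phi_{\theta}$ and $\phi_{\theta^{2}}$ of Lemma~\ref{le:6.2 lemma} to strip two more families of $2q^{2}$ colors off $X$ before applying Brooks to the remaining (much smaller) set; that route also yields the bound $6q^{2}$.
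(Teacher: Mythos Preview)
Your approach is essentially the one the paper sketches in its concluding remarks: cover $V(G_{q^{3}})$ by $S$ at the cost of $2q^{2}$ colors, and then bound $\Delta(G_{q^{3}}[X])$ to color the remainder. The paper says only that ``one can explicitly compute the relations satisfied by vertices in $X$ and then use these equations to bound the maximum degree of $G_{q^{3}}[X]$,'' without giving details. Your execution is cleaner than a direct computation: noting that any $\theta\in\mathbb{F}_{q^{3}}\setminus\fq$ already gives a basis (since $[\mathbb{F}_{q^{3}}:\fq]=3$ is prime), you identify the single relation $t_{2}=2Q_{2}(s)$ cutting out $X$, observe that the ternary quadratic form $Q_{2}$ has determinant $-1$ and is therefore nondegenerate regardless of the minimal polynomial of $\theta$, and conclude $\Delta(G_{q^{3}}[X])\le q^{2}$ from the standard count of isotropic vectors of a nondegenerate ternary form over $\fq$. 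This makes transparent why no hypothesis of the type ``$x^{3}-\mu$ irreducible'' is needed here, and in fact yields $\chi(G_{q^{3}})\le 2q^{2}+(q^{2}+1)=3q^{2}+1$, stronger than the stated $6q^{2}$. One small remark: your parenthetical justification of the Brooks step is not quite the right reasoning, but the bound $\chi(G_{q^{3}}[X])\le q^{2}+1$ already follows from the greedy estimate $\chi\le\Delta+1$, and since $G_{q^{3}}[X]$ is $C_{4}$-free (so contains no $K_{q^{2}+1}$) Brooks even gives $\chi(G_{q^{3}}[X])\le q^{2}$; either way the conclusion stands.
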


A consequence is that $\chi( ER_{q^3} ) \leq 6q^2 + 1$ whenever $q$ is a power of an odd prime.  Unfortunately, we were not able to extend this bound to the general case.  In the $q^3$ case, one can explicitly compute the relations satisfied by vertices in
$X$ (see Section 3.2) and then use these equations to bound the maximum degree of $G_{q^3}[X]$.  Dealing with the set $X$ is one of the main obstacles in our approach.

We remark that if one could improve the bound in Lemma \ref{le:sams lemma}, it would improve our result in Theorem \ref{th:chromatic number 2}. It seems likely that Lemma \ref{le:sams lemma} could be strengthened enough to improve the bound in Theorem \ref{th:chromatic number 2} all the way down to
\[
\chi(ER_{q^{2r+1}}) \leq (2r+1+o(1)) q^{r+1}.
\]
Perhaps this can be done using techniques from algebraic geometry.

\medskip

The conditions on $q$ for which there exists an irreducible polynomial $x^t - \mu \in \fq [x]$ are known (see Theorem 3.75 of \cite{LN}).  Let $q$ be a power of an odd prime and let $t  \geq 3$ be an odd integer.  Let
$\textup{ord}( \mu , q)$ be the order of $\mu$ in group $\fq^*$.  Then $x^t - \mu \in \fq [x]$ is irreducible if and only if
each prime factor of $t$ divides $\textup{ord}( \mu , q)$ but does not divide $\frac{ q- 1}{ \textrm{ord}( \mu , q) }$.
Since $\fq^*$ is cyclic, for any divisor $d$ of $q-1$, there is an element $a \in \fq^*$ with $\textup{ord}(a , q ) = d$.
As long as $t$ divides $q - 1$, we can choose an element $\mu \in \fq^*$ so that
$t$ divides $\textup{ord}( \mu , q)$ but $t$ does not divide $\frac{q-1}{ \textup{ord} ( \mu , q)}$.  Therefore,
if $q \equiv 1 ( \textup{mod}~t)$, then Theorem \ref{th:chromatic number 2} applies and we obtain the upper bound
$\chi ( ER_{q^t} ) \leq \frac{2r+5}{3} q^{ \frac{4r}{3} +1 }+(2r+1)q^{r+1}+1$ in this case.
For a fixed $t \geq 3$,  Dirichlet's Theorem on primes in arithmetic progressions
 implies that there are infinitely many primes $p$ such that
$p \equiv 1 ( \textup{mod}~t)$. Then for any $q$ which is an odd power of such $p$, we have $q \equiv 1
( \textup{mod}~t)$.
\medskip

The graph $ER_q$ is an example of an orthogonal polarity graph.  Let $\Pi$ be a finite projective plane of order $q$ with
point set $\mathcal{P}$ and line set $\mathcal{L}$.  A \emph{polarity} of $\Pi$ is a bijection
$\pi : \mathcal{P} \cup \mathcal{L} \rightarrow \mathcal{P} \cup \mathcal{L}$ such that
$\phi ( \mathcal{P} ) = \mathcal{L}$, $\phi ( \mathcal{L} ) = \mathcal{P}$, $\phi^2$ is the identity, and point $p$ is on line $l$ if and only if point $\phi (l)$ is on line $\phi (p)$.  A point $p$ for which $p \in \phi (p)$ is called an \emph{absolute point}.  The
polarity $\phi$ is called \emph{orthogonal} if it has exactly $q +1$ absolute points.  The corresponding orthogonal polarity graph is the graph with vertex set $\mathcal{P}$, and vertices $p_1$ and $p_2$ are adjacent if and only if $p_1 \in \phi (p_2)$.
The graph $ER_q$ is an orthogonal polarity graph which can be obtained from the projective plane $PG(2,q)$ and the polarity that sends the point $(x,y,z)$ to the line $[x,y,z]$, and sends the line $[x,y,z]$ to the point $(x,y,z)$ (loops in this graph must be removed to obtain $ER_q$).  There are non-desaurgesian projective planes that have orthogonal polarities and this leads us to the following problem.

\begin{problem}
Determine if there is an absolute constant $C$ such that the following holds.
If $G$ is an orthogonal polarity graph of a projective plane of order $q$, then
\[
\chi (G) \leq C q^{1/2}.
\]
\end{problem}


\end{document}